\def\subsection{\@startsection{subsection}{2}%
	\z@{.5\linespacing\@plus.7\linespacing}{.3\linespacing}%
	{\normalfont\bfseries}}
\newtheorem{theorem}{Theorem}[section]
\newtheorem{lemma}[theorem]{Lemma}
\newtheorem{corollary}[theorem]{Corollary}
\newtheorem{prob}[theorem]{Problem}
\newtheorem{definition}[theorem]{Definition}
\newtheorem{example}[theorem]{Example}
\newcommand{\Z}{\mathbb{Z}}
\newcommand{\R}{\mathbb{R}}
\newcommand{\cB}{\mathcal{B}}
\newcommand{\cH}{\mathcal {H}}
\DeclareRobustCommand{\rchi}{{\mathpalette\irchi\relax}}
\newcommand{\irchi}[2]{\raisebox{\depth}{$#1\chi$}}
\begin{document}

\title[Chaos and frequent hypercyclicity for Composition Operators]{Chaos and frequent hypercyclicity for Composition Operators}



\author{Udayan B. Darji}
\address{Department of Mathematics, University of Louisville, Louisville,
	KY 40292, USA.}
\curraddr{}
\email{ubdarj01@gmail.com}
\thanks{The first author was supported by grant {\#}2017/19360-8 S\~ao Paulo Research Foundation (FAPESP)}

\author{Benito Pires}
\address{Departamento de Computa\c c\~ao e Matem\'atica, Faculdade de Filosofia,
	Ci\^encias e Letras, Universidade de S\~ao Paulo, Ribeir\~ao Preto, SP,
	14040-901, Brazil.}
\curraddr{}
\email{e-mail:benito@usp.br}
\thanks{The second author was partially supported by grants {\#}2018/06916-0 and  {\#}2019/10269-3
 S\~ao Paulo Research Foundation (FAPESP)}

\subjclass[2010]{Primary 47A16, 47B33; Secondary 37D45. }
\keywords{Frequently hypercyclic, chaotic operator, composition operator, $L^p$-space.}
\date{}

\dedicatory{}

\begin{abstract}  The notions of chaos and frequent hypercyclicity enjoy an intimate relationship in linear dynamics. Indeed, after a series of partial results, it was shown by Bayart and Ruzsa  in 2015 that for backward weighted shifts on $\ell_p(\mathbb{Z})$, the  notions chaos and frequent hypercyclicity coincide. It is with some effort that one shows that these two notions are distinct. Bayart and Grivaux  in 2007 constructed  a non-chaotic frequently hypercyclic weighted shift on $c_0$. It was only in 2017 that Menet  settled negatively whether every chaotic operator is frequently hypercylic. In this article, we show that for a large class of composition operators on $L^p$-spaces the notions of chaos and frequent hypercyclicity coincide. Moreover, in this particular class an invertible operator is frequently hypercyclic if and only if its inverse is frequently hypercyclic. This is in contrast to a very recent result of Menet  where an invertible frequently hypercyclic operator on $\ell_1$ whose inverse is not frequently hypercyclic is  constructed.  
\end{abstract}

\maketitle

\section{Introduction}
Let $\cH$ be a separable Banach space and $T: \cH \rightarrow \cH$ a continuous linear operator.
$T$ is {\em chaotic} if $T$ has a dense orbit and the set of periodic points of $T$ is dense in $X$. Chaotic operators are well studied in linear dynamics and well understood. For example, Grosse-Erdmann in \cite{GrosseStudia2000} gave a complete characterization of backward weighted shifts which are chaotic. 

The notion of frequent hypercyclicity is a quantitative version of hypercyclicity. It was introduced by Bayart and Grivaux in \cite{BayGri2006} in the linear setting, but it also makes sense  for Polish dynamical systems. Although such a notion is purely topological, it is related to measure-theoretical features of the topological dynamical system $(\cH, T)$. More specifically by Birkhoff's Pointwise Ergodic Theorem it follows that if $T$ admits an invariant ergodic Borel probability measure with full support, then
$T$ is frequently hypercyclic. Conversely, Grivaux and Matheron showed in \cite{GM2014} that if $\mathcal{H}$ is a reflexive Banach space, then any frequently hypercyclic operator $T$ on $\mathcal{H}$ admits a continuous invariant Borel probability measure  with full support.
The notion of frequent hypercyclicity is tricky: in fact it took a span of 10 years between the pioneer partial works \cite{BayGri2006,GEP} and the full characterization of frequently hypercyclic backward weighted shifts obtained in \cite{BayRuz2015}. 

An important problem in linear dynamics is distinguishing between chaotic operators and frequently hypercyclic operators. Bayart and Grivaux constructed in \cite{BayGri2007} a frequently hypercyclic continuous linear operator that is not chaotic while Menet showed in \cite{M2017} that there exist chaotic continuous linear operators that are not frequently hypercyclic. However, for many natural classes of operators such as  backward weighted shifts on $\ell_p(\mathbb{Z})$, these two notions coincide. 

Another important problem concerning frequently hypercyclic operators is whether the inverse of every invertible frequently hypercyclic operator is frequently hypercyclic. This was solved in the negative very recently by Menet \cite{menet2019inverse}. However, again for natural classes of operators such as backward weighted shifts on $\ell_p(\mathbb{Z})$ the inverse of an invertible frequently hypercyclic operator is frequently hypercyclic. 

Our aim in this article is to give a very large class of composition operators for which the notions of chaos and frequent hypercyclicity coincide (see Theorem~\ref{thm:FreqHyP} and Corollary~\ref{cor:ergodic}). Moreover, as the inverse of an invertible chaotic operator is chaotic, we also obtain that in our class the inverse of a frequently hypercyclic operator is also frequently hypercyclic (See Corollary~\ref{cor:invertible}). 

A powerful method for constructing frequently hypercyclic operator is to apply the Frequent Hypercyclicity Criterion (FHC). This criterion was introduced by Bayart and Grivaux in \cite{BayGri2006} and strengthened by Bonilla and Grosse-Erdmann in \cite{GrosseStudia2000}. We also explore the relationship between operators satisfying (FHC) and chaotic operators.  Theorem~\ref{SCFHC} and Theorem~\ref{thm:GenSCChar} imply that  in our setting  chaotic operators satisfy (FHC). Moreover, we prove the partial converse for composition operators on $L^p(X)$, $p \ge 2$.

We specifically study composition operators on $L^p(X)$ where $(X,\mathcal{B},\mu)$ is a $\sigma$-finite measure space and
$f:X\to X$ is a bijective, bimeasurable, nonsingular transformation satisfying $\mu\big(f^{-1}(B)\big)\le c\mu(B)$ for all $B\in\mathcal{B}$ and some $c>0$. The continuous linear operator $T_f:L^p(X)\to L^p(X)$ defined by $\varphi\mapsto \varphi\circ f$ is called the {\it composition operator induced by $f$}. In such a general setting, hypercyclic composition operators and topologically mixing composition operators on $L^p$-spaces were completely characterized in a joint work of the authors with Bayart in \cite{BDP}. Li-Yorke chaotic composition operators on $L^p$-spaces were completely characterized in a joint work of the authors with Bernardes Jr. in \cite{BJDP}. Recently, generalized hyperbolicity among composition operators was characterized by the first author, D'Aniello and Maiuriello in \cite{daniello2020generalized}. 

We would also like to point out that in a very different direction, Charpentier,  Grosse-Erdmann and Menet \cite{charpentier2019chaos} give conditions under which backward weighted shifts on K\"othe sequence spaces have the property that the notions of chaos and frequent hypercyclicity coincide.

The article is organized as follows: In Section~2 we give definitions and background results. In Section~3 we state the main results and their consequences. Section~4 consists of examples, Section~5 of proofs and Section~6 of open problems.

\section{Definitions and Background Results}
\subsection{Topological dynamics of linear operators}

Let $T:\mathcal{H}\to\mathcal{H}$ be a continuous linear operator acting on a separable Banach space $\mathcal{H}$.
\begin{definition}
    The operator $T$ is topologically transitive (or hypercyclic) if for any pair of non-empty open sets $U,V \subseteq \cH$, there is $k  > 0$ such that $T^k(U)\cap V\neq\emptyset$.  If, in addition, the set of periodic points of $T$ is dense in $\cH$, then $T$ is said to be chaotic.
\end{definition}
We recall that in the setting  of Banach spaces,  $T$ is topologically transitive if and only if $T$ admits a {\em hypercyclic vector} $\varphi$, i.e.,  $\varphi \in \cH$ such that the orbit $\{\varphi,T\varphi,T^2\varphi\ldots\}$ is dense in $\mathcal{H}$.
\begin{definition}
    The operator $T$ is  \textit{topologically mixing} if for any pair of non-empty open sets $U,V\subseteq \mathcal{H}$, there exists
$k_0\ge 0$ such that $T^k(U)\cap V\neq\emptyset$ for all $k\ge k_0$. 
\end{definition}
\begin{definition}
    A vector $\varphi\in \mathcal{H}$ is called \textit{frequently hypercyclic} if for each non-empty open set $U\subseteq \mathcal{H}$, the set of integers $\mathcal{N}(\varphi,U)=\left\{n\in\mathbb{N}: T^n\varphi\in U\right\}$ has positive lower density, that is, 
$$ \liminf_{N\to\infty} \frac{1}{N}\#\left\{1\le n\le N: T^n\varphi\in U\right\}>0\cdot
$$
The operator $T$ is called \textit{frequently hypercyclic} if it admits a frequently hypercyclic vector. 
\end{definition}
The following Frequent Hypercyclicity Criterion (FHC)  was provided by Bonilla and Grosse-Erdmann in \cite[Theorem 2.1]{BGE}. It is a strengthened version of the original criterion obtained by
  Bayart and Grivaux in  \cite[Theorem 2.1]{BayGri2006}. Its simplified reformulation is stated below in the context that we use.

\begin{theorem}[Frequent Hypercyclicity Criterion (FHC)]\label{FHC} Let $(\mathcal{H},\Vert\cdot\Vert)$ be a  separable Banach space and let  $T:\mathcal{H}\to\mathcal{H}$  be a continuous linear operator. Assume there exists a dense  subset $\mathcal{H}_0$ of $\mathcal{H}$ and a map $S:\mathcal{H}_0\to\mathcal{H}_0$ such that, for any $\varphi\in \mathcal{H}_0$,
\begin{itemize}
\item [$(a)$] The series $\sum_{n\ge 1}  T^n (\varphi)$ converges unconditionally;
\item [$(b)$] The series $\sum_{n\ge 1}  S^n(\varphi)$  converges unconditionally;
\item [$(c)$] $T S(\varphi)=\varphi$.
\end{itemize}
Then, $T$ is frequently hypercyclic, chaotic and topologically mixing.
\end{theorem}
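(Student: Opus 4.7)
The plan is to follow the Bonilla--Grosse-Erdmann scheme: first deduce topological mixing and density of periodic points, which are relatively quick, then construct an explicit frequently hypercyclic vector. For topological mixing, given nonempty open sets $U,V\subset\mathcal{H}$, I would pick $\varphi\in U\cap\mathcal{H}_0$ and $\psi\in V\cap\mathcal{H}_0$. By (a)--(b) the general terms $T^n\varphi$ and $S^n\psi$ tend to zero, while an induction using (c), together with the invariance $S(\mathcal{H}_0)\subset\mathcal{H}_0$, yields $T^kS^k\psi=\psi$ for all $k$. Hence for all sufficiently large $k$ the vector $z_k:=\varphi+S^k\psi$ lies in $U$ and $T^kz_k=T^k\varphi+\psi$ lies in $V$.

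For density of periodic points, given $\varphi\in\mathcal{H}_0$ and $N\in\N$, I would form
$$p_{N,\varphi}=\varphi+\sum_{k\ge1}T^{kN}\varphi+\sum_{k\ge1}S^{kN}\varphi,$$
which converges as a subseries of two unconditionally convergent series. The identity $T^NS^{kN}\varphi=S^{(k-1)N}\varphi$, obtained from (c), immediately gives $T^N p_{N,\varphi}=p_{N,\varphi}$, and the tails vanish as $N\to\infty$, so $p_{N,\varphi}\to\varphi$. Density of $\mathcal{H}_0$ in $\mathcal{H}$ then yields density of periodic points.

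The heart of the argument is frequent hypercyclicity. Fix a countable dense sequence $(y_j)_{j\in\N}\subset\mathcal{H}_0$. A combinatorial lemma of Bayart--Grivaux (see \cite{BayGri2006}) provides pairwise disjoint subsets $A(l,j)\subset\N$, $l,j\in\N$, each of positive lower density, with a prescribed separation: any two distinct points $m\in A(l,j)$ and $n\in A(l',j')$ satisfy $|m-n|\ge g(l,l',j,j')$ for a growth function $g$ chosen in advance. Then define
$$x=\sum_{j,l\in\N}\,\sum_{n\in A(l,j)}S^n(y_j),$$
convergence being guaranteed by unconditional convergence of each $\sum_n S^n(y_j)$ together with a sufficiently fast $g$. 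For $m\in A(l_0,j_0)$ the relation $T^mS^m=\mathrm{Id}$ on $\mathcal{H}_0$ isolates a leading term $y_{j_0}$, while the remainder splits into a forward piece indexed by $n<m$ (controlled by a tail of $\sum_{k\ge1}T^k(y_j)$ via $T^mS^n=T^{m-n}$) and a backward piece indexed by $n>m$ (controlled by a tail of $\sum_{k\ge1}S^k(y_j)$ via $T^mS^n=S^{n-m}$). The separation renders both simultaneously small, so $\lVert T^mx-y_{j_0}\rVert$ is arbitrarily small for every $m\in A(l_0,j_0)$, showing that $x$ visits every neighborhood of $y_{j_0}$ along a set of positive lower density.

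The main obstacle is calibrating the growth function $g$ so that the forward and backward tails can be controlled simultaneously across every pair $(l_0,j_0)$, uniformly over $m\in A(l_0,j_0)$; this step relies essentially on unconditional (as opposed to merely norm) convergence of the two series in (a) and (b), since the triple sum defining $x$, and the rearranged expression for $T^m x$, must be reindexed freely.
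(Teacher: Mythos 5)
The paper does not prove this statement: Theorem~\ref{FHC} is quoted as a known result of Bonilla and Grosse-Erdmann \cite{BGE} (building on \cite{BayGri2006}), so there is no internal proof to compare against. Your outline reconstructs the standard argument from that literature correctly: the mixing step via $z_k=\varphi+S^k\psi$ with $T^kS^k\psi=\psi$, the periodic points via $p_{N,\varphi}$ with vanishing tails, and the frequently hypercyclic vector $x=\sum_{j,l}\sum_{n\in A(l,j)}S^n(y_j)$ built on the Bayart--Grivaux family of disjoint sets of positive lower density are all sound, and the part you leave schematic (calibrating the separation so the forward tails $T^{m-n}y_j$ and backward tails $S^{n-m}y_j$ are simultaneously summable over all pairs $(l,j)$) is precisely the technical core of the cited proof; the only cosmetic inaccuracy is that the standard lemma gives separation $|m-n|\ge \nu+\nu'$ depending on the second indices alone, the effect of an arbitrary growth function being obtained by re-indexing that parameter.
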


\subsection{Measurable dynamics}

\begin{definition} A transformation $f:X\to X$ on the measure space $(X,\mathcal{B},\mu)$ is
\begin{itemize}
\item [(a)] {bimeasurable} if $f(B)\in \mathcal{B}$ and
$f^{-1}(B)\in \mathcal{B}$ for all $B\in\mathcal{B}$;
\item [(b)] {nonsingular} if $\mu\big(f^{-1}(B)\big)=0$ if 
and only if $\mu(B)=0$.
\end{itemize}
\end{definition}

\begin{definition}\label{msystem}
	A {measurable  system} is a tuple $(X,{\mathcal B},\mu, f)$, where
	\begin{enumerate}
		\item $(X,{\mathcal B},\mu)$ is a $\sigma$-finite measure space with $\mu(X)>0$; 
		\item $f : X \to X$ is a bijective bimeasurable nonsingular transformation;
		\item there is  $c > 0$ such that
		\begin{equation}\label{condition}
			\mu(f^{-1}(B)) \leq c \mu(B) \ \textrm{ for every } B \in {\mathcal B}.
			\tag{$\star$}
		\end{equation}
	\end{enumerate}
	If both $f$ and $f^{-1}$ satisfy $(*)$, then we say that the measurable system is invertible. 
\end{definition}

\begin{definition}\label{coperator} Let $p\ge 1$. The {composition operator} $T_f$ induced by
a measurable system $(X,\mathcal{B},\mu,f)$ is the map
$T_f:L^p(X)\to L^p(X)$ defined by 
$$T_f: \varphi\to\varphi\circ f.
$$
\end{definition}

It is well-known that (\ref{condition}) guarantees that $T_f$ is a continuous linear operator. We refer the reader to \cite{SM} for a detailed exposition on compositions operators.

\begin{definition}\label{2def} A measurable transformation $f:X\to X$ on the measure space $(X,\mathcal{B},\mu)$ is
\begin{itemize}
\item [(a)] {conservative} if for each measurable set
$B$ of positive $\mu$-measure, there is $n\ge 1$ such that
$\mu\big(B\cap f^{-n}(B)\big)>0$;
\item [(b)] {dissipative} if  there exists $ W\in\mathcal{B}$ such
that $f^n(W)$, $n\in\mathbb{Z}$, are pairwise disjoint and
$X=\bigcup_{n\in\mathbb{Z}} f^n(W)$. 
\end{itemize}
The measurable system $(X,\mathcal{B},\mu,f)$ is called conservative (respectively, dissipative) if $f$ is conservative (respectively, dissipative).
\end{definition}
We say that a set $A\subseteq X$ is \textit{$f$-invariant} if $f^{-1}(A)=A$. 
\begin{theorem}[Hopf \cite{AA,K}]
	Let  $(X, {\mathcal B}, \mu,f)$ be a measurable system. Then, $X$ is the union of two disjoint $f$-invariant sets ${\mathcal C}(f)$ and ${\mathcal D}(f)$, called the conservative and the dissipative parts of $f$, respectively, such that $f\vert_{\mathcal{C}(f)}$ is conservative and
	$f\vert_{\mathcal{D}(f)}$ is dissipative.
\end{theorem}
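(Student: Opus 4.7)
The plan is to construct a single measurable wandering set $W_0$ whose two-sided orbit absorbs every other wandering set modulo $\mu$-null sets, declare $\mathcal{D}(f)$ to be this orbit-saturation, set $\mathcal{C}(f):=X\setminus\mathcal{D}(f)$, and then read off dissipativity on $\mathcal{D}(f)$ from the construction and conservativity on $\mathcal{C}(f)$ by a contradiction-via-maximality argument. Throughout, I call $W\in\mathcal{B}$ \emph{wandering} if $\{f^n(W)\}_{n\in\Z}$ is pairwise disjoint, and write $\mathcal{O}(W):=\bigcup_{n\in\Z}f^n(W)$; bimeasurability makes $\mathcal{O}(W)$ measurable, and bijectivity makes it $f$-invariant.

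To produce $W_0$ I would exploit $\sigma$-finiteness: fix a partition $X=\bigsqcup_{m\ge 1}X_m$ with $0<\mu(X_m)<\infty$ and introduce the size functional $\sigma(W):=\sum_{m\ge 1}2^{-m}\mu(W\cap X_m)/(1+\mu(X_m))\in[0,1]$, which vanishes exactly on null sets. Recursively pick wandering sets $W_1,W_2,\ldots$ with $W_{k+1}\subseteq X\setminus\mathcal{O}(W_1\cup\cdots\cup W_k)$ and $\sigma(W_{k+1})\ge s_k/2$, where $s_k$ is the supremum of $\sigma(W)$ over wandering $W\subseteq X\setminus\mathcal{O}(W_1\cup\cdots\cup W_k)$. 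Then $W_0:=\bigsqcup_k W_k$ is wandering, since $f^i(W_k)\cap f^j(W_l)=\emptyset$ holds automatically for $k=l,\,i\ne j$ and for $k\ne l$ follows from the fact that $\mathcal{O}(W_k)$ and $\mathcal{O}(W_l)$ were made disjoint at step $\max(k,l)$. Disjointness of the $W_k$ gives $\sum_k\sigma(W_k)=\sigma(W_0)\le 1$, hence $s_k\to 0$, so \emph{every} wandering set disjoint from $\mathcal{O}(W_0)$ has $\sigma=0$, i.e.\ is $\mu$-null.

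Setting $\mathcal{D}(f):=\mathcal{O}(W_0)$ and $\mathcal{C}(f):=X\setminus\mathcal{D}(f)$ delivers the two disjoint $f$-invariant pieces at once, and $f\vert_{\mathcal{D}(f)}$ is dissipative with witness $W_0$ by construction. For conservativity of $f\vert_{\mathcal{C}(f)}$ I argue by contradiction: if $A\subseteq\mathcal{C}(f)$ had $\mu(A)>0$ and $\mu(A\cap f^{-n}(A))=0$ for every $n\ge 1$, I would set $N:=\bigcup_{n\ge 1}(A\cap f^{-n}(A))$ (a null set) and $A':=A\setminus\bigcup_{k\ge 0}f^{-k}(N)$. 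Nonsingularity makes each $f^{-k}(N)$ null, so $\mu(A')=\mu(A)$, and by construction $A'\cap f^{-n}(A')=\emptyset$ for all $n\ge 1$; bijectivity of $f$ upgrades this to pairwise disjointness of $\{f^n(A')\}_{n\in\Z}$, so $A'$ is wandering. Since $\mathcal{C}(f)$ is $f$-invariant, $\mathcal{O}(A')\subseteq\mathcal{C}(f)$ is disjoint from $\mathcal{O}(W_0)=\mathcal{D}(f)$, contradicting the maximality property established above.

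The main obstacle is the fact that the family of wandering sets is stable under subsets but not under unions: two wandering sets can have intertwined orbits, so one cannot just take a $\sigma$-finite union of a $\sigma$-maximizing sequence. The geometric exhaustion above bypasses this by forcing each new $W_{k+1}$ to live outside every orbit already chosen, trading a single supremum for the summability of a geometric series. Everything else is routine verification using nonsingularity and bijectivity of $f$.
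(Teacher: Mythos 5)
The paper does not prove this statement: it is quoted as the classical Hopf Decomposition Theorem and cited to Aaronson and Krengel, so there is no in-paper argument to compare against. Your proposal is a correct, self-contained proof, and it follows the standard exhaustion strategy used in those references: build a measure-theoretically maximal wandering set, let $\mathcal{D}(f)$ be its (measurable, $f$-invariant) two-sided orbit, and show that any positive-measure set in the complement must return. The details check out. The size functional $\sigma$ is a finite measure equivalent to $\mu$, so $\sum_k\sigma(W_k)\le 1$ forces $s_k\to 0$, and any wandering set disjoint from $\mathcal{O}(W_0)$ is a competitor at every stage, hence null; the orbit-disjointness you enforce at each step is exactly what makes $\bigsqcup_k W_k$ wandering, which is the point where a naive union of a $\sigma$-maximizing sequence would fail, as you note. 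On the conservative side, discarding the null set $N=\bigcup_{n\ge1}(A\cap f^{-n}(A))$ (your additional removal of $\bigcup_{k\ge0}f^{-k}(N)$ is harmless but not needed) yields $A'\cap f^{-n}(A')=\emptyset$ for all $n\ge1$, and bijectivity correctly upgrades this to two-sided wandering since $f^n(A')\cap f^m(A')=f^m\big(f^{n-m}(A')\cap A'\big)$. That contradicts maximality, so $\mathcal{C}(f)$ is conservative in the sense of Definition 2.7. Note that your argument only uses bijectivity, bimeasurability and nonsingularity, not the norm bound $(\star)$, which is consistent with the generality in which Hopf's theorem is usually stated.
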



\begin{definition} Let $(X,\mathcal{B},\mu,f)$ be a measurable system. A measurable set $W\subseteq X$ is a {\em wandering set} if the sets $f^n(W)$, $n\in\mathbb{Z}$, are pairwise disjoint. The system $(X,\mathcal{B},\mu,f)$ is said to be {\it generated by a wandering set $W$} if $X=\bigcup_{n\in\mathbb{Z}}f^n(W)$.
\end{definition}
In the sequel, we let $\mathcal{B}(W)=\{B\cap W: B\in\mathcal{B}\}$. 

\begin{definition} \label{defnBD}
	We say that a dissipative system $(X,{\mathcal B},\mu, f)$ is of {\it bounded distortion} if there exist a wandering set $W$ of finite positive $\mu$-measure and $K>0$ such that
	\begin{itemize}
	    \item [(i)] $W$ generates $(X,\mathcal{B},\mu,f)$, i.e,
	    $X=\bigcup_{n\in\mathbb{Z}}f^n(W)$;
	    \item [(ii)] For all $n \in \mathbb Z$ and  $C \in {\mathcal B}(W)$ with positive $\mu$-measure,
	    \begin{equation*}
		\dfrac{1}{K} \frac{\mu(f^n(W))}{\mu(W)} \leq \frac{\mu(f^n (C))}{\mu (C)} \leq K \frac{\mu(f^n(W))}{\mu(W)}.
	\end{equation*}.
	
	\end{itemize}
\end{definition}

Notice that in the definition of dissipative system (see Definition \ref{2def}), we do not require that $W$ have finite measure.


\section{Statement of the Main Results}

{In this section we introduce a condition called {\it Summability Condition} or simply {\it Condition (SC)} that is useful for constructing composition operators that are simultaneously
chaotic, topologically mixing and frequently hypercyclic. We split our results into two subsections.
In the first subsection, we introduce Condition (SC) and we explore its relation to the Frequent Hypercyclicity Criterion (FHC). In particular, we show that if we add the hypothesis that
$f$ is dissipative, then we obtain that Condition (SC) is equivalent to $T_f$ being chaotic. In the second subsection we show that if $f$ is a dissipative transformation of bounded distortion, then Condition (SC) is equivalent to $T_f$ being frequently hypercyclic.}

\subsection{The Summability Condition  and the Frequent Hypercyclicity Criterion }

Let $(X,\mathcal{B},\mu,f)$ be a measurable system. We  say that $f$ satisfies the Summability Condition (SC) if for each $\epsilon>0$ and $B\in\mathcal{B}$ with $\mu(B)<\infty$, there exists a measurable set $ B'\subseteq B$ such that
\begin{equation}\tag{SC}
\mu\big(B{\setminus }B'\big)<\epsilon\quad\textrm{and}\quad\sum_{n\in\mathbb{Z}} \mu\big(f^n(B')\big)<\infty.
\end{equation}
The following result shows that the Summability Condition (SC)   is the natural  translation of the Frequent Hypercyclicity Criterion (FHC) to the composition operator framework. 
\begin{theorem}\label{SCFHC} Let $(X,\mathcal{B},\mu,f)$ be a measurable system. For all $p\ge 1$,  (SC) implies  (FHC). Moreover, (SC) and (FHC) are equivalent for all  $p\ge 2$. 
\end{theorem}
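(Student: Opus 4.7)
The plan is to prove the two implications separately, with the forward direction (SC) $\Rightarrow$ (FHC) valid for all $p\ge 1$ and the converse requiring $p\ge 2$. For (SC) $\Rightarrow$ (FHC), the first step is to show that (SC) forces $f$ to be dissipative. If $\mu(\mathcal{C}(f))>0$, one can pick a finite-measure set $B\subseteq X$ with $\mu(B\cap\mathcal{C}(f))>0$ and apply (SC) with $\epsilon<\mu(B\cap\mathcal{C}(f))$ to obtain $B'\subseteq B$ with $\mu(B'\cap\mathcal{C}(f))>0$ and $\sum_{n\in\mathbb{Z}}\mu(f^n(B'))<\infty$. Halmos's recurrence theorem, applied on $B'\cap\mathcal{C}(f)$, then yields that almost every $x$ there returns to $B'$ infinitely often in both time directions, so the multiplicity function $g(x)=\sum_{n\in\mathbb{Z}}\chi_{f^n(B')}(x)$ is infinite on a set of positive measure. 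This contradicts $\int g\,d\mu=\sum_{n}\mu(f^n(B'))<\infty$, hence $\mu(\mathcal{C}(f))=0$, and there exists a wandering set $W_0$ with $X=\bigsqcup_{n\in\mathbb{Z}}f^n(W_0)$.

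With dissipativity secured, I would take $\mathcal{H}_0$ to be the linear span of $\{\chi_C:C\in\mathcal{B}\text{ is wandering and }\sum_{n\in\mathbb{Z}}\mu(f^n(C))<\infty\}$ and define $S\varphi=\varphi\circ f^{-1}$ on $\mathcal{H}_0$. Density of $\mathcal{H}_0$ in $L^p$ is proved via a two-step approximation: approximate $\chi_B$ by $\chi_{B'}$ via (SC), then decompose $B'=\bigsqcup_{n\in\mathbb{Z}}(B'\cap f^n(W_0))$ into wandering pieces (each is a subset of the wandering $f^n(W_0)$ with summable orbit inherited from $B'$), and truncate the tail using $\sum_n\mu(B'\cap f^n(W_0))=\mu(B')<\infty$. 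The map $S$ preserves $\mathcal{H}_0$ because $f(C)$ is again wandering with the same orbit sum as $C$. The key identity underlying conditions (a) and (b) of (FHC) for an arbitrary $\chi_C\in\mathcal{H}_0$ is that, by wandering, the sets $\{f^{-n}(C)\}_{n\ge 1}$ are pairwise disjoint, so
\[
\Bigl\|\sum_{n\in F}T_f^n\chi_C\Bigr\|_p^p=\sum_{n\in F}\mu(f^{-n}(C))
\]
for every finite $F\subseteq\mathbb{N}$; this tail tends to zero as $\inf F\to\infty$ by summability of the orbit measures. The analogous identity with $S^n\chi_C=\chi_{f^n(C)}$ handles the $S$-series. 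Condition (c) is immediate from $T_fS\chi_C=\chi_{f^{-1}(f(C))}=\chi_C$, and linearity extends (a), (b), (c) to all of $\mathcal{H}_0$.

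For (FHC) $\Rightarrow$ (SC) with $p\ge 2$, I would invoke the cotype-$p$ property of $L^p$ (Orlicz's theorem): when $p\ge 2$, unconditional convergence of $\sum x_n$ in $L^p$ forces $\sum\|x_n\|_p^p<\infty$. Given $B$ of finite measure and $\epsilon>0$, density of $\mathcal{H}_0$ supplies $\varphi\in\mathcal{H}_0$ with $\|\varphi-\chi_B\|_p<\delta$ for any prescribed $\delta>0$. Setting $B'=\{x\in B:|\varphi(x)|\ge\tfrac12\}$, the inclusion $B\setminus B'\subseteq\{|\varphi-\chi_B|>\tfrac12\}$ together with Markov's inequality gives $\mu(B\setminus B')\le(2\delta)^p$, which is $<\epsilon$ for $\delta<\tfrac12\epsilon^{1/p}$. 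Pointwise $|\varphi|\ge\tfrac12\chi_{B'}$, and the relation $T_fS\varphi=\varphi$ on $\mathcal{H}_0$ forces $S\varphi=\varphi\circ f^{-1}$, so
\[
\|T_f^n\varphi\|_p^p\ge 2^{-p}\mu(f^{-n}(B'))\quad\text{and}\quad\|S^n\varphi\|_p^p\ge 2^{-p}\mu(f^n(B'));
\]
summing over $n\ge 1$ and adding the $n=0$ term yields $\sum_{n\in\mathbb{Z}}\mu(f^n(B'))<\infty$, completing (SC).

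The main obstacle is the forward direction. A naive attempt to use $\chi_{B'}$ itself as the (FHC) witness fails for $p>1$: summability $\sum_n\mu(f^n(B'))<\infty$ only places the multiplicity function $g$ in $L^1$, not in $L^p$. The dissipativity step is what unlocks the wandering decomposition, and wandering is precisely the property needed to reduce the series $\sum_n T_f^n\chi_C$ to a sum of indicators of pairwise disjoint sets, for which the $L^p$-convergence criterion coincides with summability of measures for every $p\ge 1$.
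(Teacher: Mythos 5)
Your proposal is correct and follows essentially the same route as the paper: dissipativity is first extracted from (SC) (the paper derives the contradiction with conservativity by constructing a positive-measure set that never returns, whereas you integrate the multiplicity function against Halmos recurrence, but the underlying idea --- summable orbit measures are incompatible with recurrence --- is the same), then $\mathcal{H}_0$ is built from wandering pieces with summable orbit measure so that the series in (a) and (b) become indicators of pairwise disjoint sets, with $S\varphi=\varphi\circ f^{-1}$. The converse for $p\ge 2$ via Orlicz/cotype, the $|\varphi|\ge\tfrac12$ thresholding, and the identification $S^n\varphi=\varphi\circ f^{-n}$ all match the paper's argument.
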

{Now we will provide a list of results that show how Condition (SC) is useful to characterize when a transformation $f$ is dissipative and when the composition operator $T_f$ is frequently hypercyclic or chaotic. The results below are true for all $p\ge 1$.} 

\begin{theorem}[(SC) Characterization, General Case]\label{thm:GenSCChar} Let $(X, \cB, \mu, f)$ be a measurable system and $T_f: L^p(X) \rightarrow L^p(X)$ be the associated composition operator. The following statements are equivalent.
\begin{enumerate}[label=(\alph*), align=left, leftmargin=*]

 	\item $f$ satisfies Condition (SC);
 	\item $f$ is dissipative and $T_f$ has a dense set of periodic points. 
 \end{enumerate} 
  Moreover, any of the above implies that  $T_f$ is chaotic, topologically mixing and frequently hypercyclic.
   \end{theorem}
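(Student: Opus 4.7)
The plan is to derive the ``moreover'' assertion and both implications in (a)$\Leftrightarrow$(b) from the already-established Theorems~\ref{FHC} and~\ref{SCFHC} together with classical ergodic recurrence results. The easy half is the \emph{moreover} clause: by Theorem~\ref{SCFHC}, (SC) implies (FHC), and Theorem~\ref{FHC} then yields that $T_f$ is frequently hypercyclic, chaotic and topologically mixing. In particular $T_f$ has a dense set of periodic points, which already gives one half of (b).

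To finish (a)$\Rightarrow$(b), I would argue by contradiction that $f$ is dissipative. If the conservative part $\mathcal{C}(f)$ had positive measure, $\sigma$-finiteness would provide $B \subseteq \mathcal{C}(f)$ with $0<\mu(B)<\infty$, and applying (SC) with $\epsilon = \mu(B)/2$ would produce $B' \subseteq B$ with $\mu(B')>0$ and $\sum_{n \in \mathbb{Z}}\mu(f^n(B'))<\infty$. The tail $\sum_{n \ge 1}\mu(f^{-n}(B'))<\infty$ forces $\mu(\limsup_n f^{-n}(B')) = 0$ via Borel--Cantelli, while Halmos's recurrence theorem applied on $\mathcal{C}(f)$ ensures that almost every point of $B'$ belongs to $f^{-n}(B')$ for infinitely many $n\ge 1$. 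These two facts contradict $\mu(B')>0$.

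For (b)$\Rightarrow$(a), fix $B$ with $\mu(B)<\infty$ and $\epsilon>0$. The idea is to use a periodic vector of $T_f$ to cover $B$ (up to $\epsilon$) by an $f^k$-invariant envelope of finite measure, and then to carve $B'$ out of it using dissipativity. Choose $\delta>0$ small (to be fixed at the end) and pick $\psi \in L^p(X)$ with $T_f^{k}\psi = \psi$ and $\|\chi_B-\psi\|_p<\delta$. Setting $E := \{|\psi|>1/2\}$, the identity $\psi\circ f^k = \psi$ makes $E$ invariant under $f^k$, Chebyshev gives $\mu(E) \le 2^p\|\psi\|_p^p < \infty$, and the pointwise comparison on $B\setminus E$ yields $\mu(B\setminus E) \le (2\delta)^p$. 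Dissipativity of $f$ passes to $f^k$ and then by restriction to the $f^k$-invariant set $E$, producing a wandering fundamental domain $F \subseteq E$ for $f^k$ with $E = \bigsqcup_{j \in \mathbb{Z}} f^{kj}(F)$. Because $\sum_j \mu(B \cap f^{kj}(F)) = \mu(B\cap E)$ converges, some $J$ satisfies $\sum_{|j|>J}\mu(B \cap f^{kj}(F)) < \epsilon/2$, and I would set $B' := B \cap \bigsqcup_{|j|\le J} f^{kj}(F)$; for $\delta$ small enough, $\mu(B \setminus B')<\epsilon$.

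The main obstacle is then the summability of $\sum_{n \in \mathbb{Z}}\mu(f^n(B'))$, since condition $(\star)$ only controls negative iterates. The key observation is that iterating $f^k(E)=E$ gives $f^{kn}(E)=E$ for every $n \in \mathbb{Z}$, so for any $r \in \mathbb{Z}$ one can write $f^r(E) = f^{-s}(E)$ with $s \in \{0,\dots,k-1\}$ and thereby bound $\mu(f^r(E)) \le C\mu(E)$ for a constant $C=C(c,k)$ independent of $r$. Splitting $n = k\ell+r$ with $0\le r<k$ and writing $B' = \bigsqcup_{|j|\le J} B_j$ with $B_j \subseteq f^{kj}(F)$, the disjointness of $\{f^{k\ell}(f^{-kj}(B_j))\}_{\ell \in \mathbb{Z}}$ inside $E$ propagates under $f^r$ to give $\sum_\ell \mu(f^{r+k\ell}(f^{-kj}(B_j))) \le \mu(f^r(E)) \le C \mu(E)$; summing over the $2J+1$ relevant indices $j$ and the $k$ residues $r$ yields $\sum_n \mu(f^n(B')) \le (2J+1)kC\mu(E) < \infty$. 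This delivers (SC) and closes the equivalence.
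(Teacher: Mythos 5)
Your proposal is correct, and its overall architecture matches the paper's: the ``moreover'' clause and (a)$\Rightarrow$(b) via Theorem~\ref{SCFHC} and Theorem~\ref{FHC} plus a separate dissipativity lemma, and (b)$\Rightarrow$(a) by approximating $\rchi_B$ with a periodic vector. The differences are in execution. For dissipativity, the paper's Lemma~\ref{lem1proofs} proves the recurrence contradiction by hand (it builds $W=B'\setminus\bigcup_{|n|\ge N}f^n(B')$ and shows almost every point of a positive-measure subset returns to $W$), whereas you package the same idea as Borel--Cantelli against Halmos's recurrence theorem on $\mathcal{C}(f)$; yours is shorter but imports a named theorem the paper avoids. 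For (b)$\Rightarrow$(a), the paper first truncates $B$ to $B_0=B\cap\bigcup_{|k|\le n_0}f^k(W)$ using a \emph{global} wandering set, then picks a periodic point of period $N\ge 2n_0+1$ so that the sets $f^{kN}(B')$ are automatically pairwise disjoint, and bounds $\sum_k\mu(f^{kN}(B'))$ by $(4/3)^p\int|\varphi|^p\,\mathrm{d}\mu$; you instead extract a finite-measure $f^k$-invariant envelope $E=\{|\psi|>1/2\}$ from the periodic function itself, restrict the wandering structure to $E$, and bound the sum by multiples of $\mu(E)$. Both then use $(\star)$ to interpolate the non-multiples of the period. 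Your route has the mild advantage of not needing to enlarge the period to beat $2n_0+1$, at the cost of the (standard, but worth stating) technicality that $E$ is $f^k$-invariant only up to a null set and of verifying that a fundamental domain for $f^k$ restricts to one for $E$; the paper's route keeps everything inside one explicit chain of inequalities. Both are complete proofs.
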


\begin{theorem}[(SC) Characterization, $\mu(X) < \infty$]\label{thm:FinMeasSCChar}
	Let $(X, \cB, \mu, f)$ be a measurable system with $\mu(X)<\infty$ and $T_f: L^p(X) \rightarrow L^p(X)$ be the associated composition operator. Then, the following statements are equivalent.
\begin{enumerate}[label=(\alph*), align=left, leftmargin=*]
		\item  $f$ satisfies Condition (SC);
		\item  $f$ is dissipative.
	\end{enumerate}
Moreover, any of the above implies that  $T_f$ is chaotic, topologically mixing and frequently hypercyclic.
\end{theorem}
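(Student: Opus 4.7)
The plan is to prove the theorem by showing that the only non-trivial direction reduces to a short computation that exploits the finiteness of $\mu(X)$. First, note that the implication (a) $\Rightarrow$ (b) and the final ``Moreover'' clause are already contained in Theorem~\ref{thm:GenSCChar}: if $f$ satisfies (SC), then $f$ is dissipative (and additionally $T_f$ has dense periodic points, and is chaotic, topologically mixing, and frequently hypercyclic). So the whole content of the theorem is the implication (b) $\Rightarrow$ (a), namely that under the extra hypothesis $\mu(X)<\infty$, dissipativity alone already forces (SC).

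For (b) $\Rightarrow$ (a), I would start with the wandering set decomposition. Since $f$ is dissipative, by Definition~\ref{2def}(b) there exists $W \in \mathcal{B}$ such that the sets $\{f^n(W)\}_{n\in\mathbb{Z}}$ are pairwise disjoint and $X = \bigsqcup_{n\in\mathbb{Z}} f^n(W)$. Because $\mu(X) < \infty$, we have
\begin{equation*}
\sum_{n \in \mathbb{Z}} \mu\bigl(f^n(W)\bigr) \;=\; \mu(X) \;<\; \infty.
\end{equation*}
Given $\epsilon > 0$ and $B \in \mathcal{B}$ with $\mu(B) < \infty$, I would decompose $B = \bigsqcup_{n \in \mathbb{Z}} \bigl(B \cap f^n(W)\bigr)$. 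Since $\mu(B) = \sum_n \mu(B \cap f^n(W)) < \infty$, there is $N$ such that the tail satisfies $\sum_{|n|>N} \mu(B \cap f^n(W)) < \epsilon$. Define
\begin{equation*}
B' \;=\; B \cap \bigcup_{|n| \le N} f^n(W),
\end{equation*}
so that $\mu(B \setminus B') < \epsilon$ immediately.

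To verify the summability of $\sum_{k \in \mathbb{Z}} \mu\bigl(f^k(B')\bigr)$, I would use that $f$ is a bijection, so $f^k(B') = \bigsqcup_{|n| \le N} f^k\bigl(B' \cap f^n(W)\bigr)$ is a disjoint union. Since $B' \cap f^n(W) \subseteq f^n(W)$, we get $f^k(B' \cap f^n(W)) \subseteq f^{k+n}(W)$, hence $\mu(f^k(B' \cap f^n(W))) \le \mu(f^{k+n}(W))$. Summing over $k$ and then over the finitely many $n$ with $|n| \le N$ and re-indexing,
\begin{equation*}
\sum_{k \in \mathbb{Z}} \mu\bigl(f^k(B')\bigr) \;\le\; \sum_{|n| \le N}\, \sum_{k \in \mathbb{Z}} \mu\bigl(f^{k+n}(W)\bigr) \;=\; (2N+1)\,\mu(X) \;<\; \infty.
\end{equation*}
This establishes (SC). The ``Moreover'' statement is then an immediate invocation of Theorem~\ref{thm:GenSCChar}. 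There is no real obstacle here beyond noticing the tail argument; the finiteness of $\mu(X)$ converts dissipativity into a summable wandering decomposition, which is exactly what (SC) demands.
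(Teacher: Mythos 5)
Your proposal is correct and follows essentially the same route as the paper: both reduce everything to the implication (b) $\Rightarrow$ (a), take a generating wandering set $W$, set $B' = B \cap \bigcup_{|n|\le N} f^n(W)$ for a suitable tail cutoff $N$, and bound $\sum_{k\in\mathbb{Z}}\mu(f^k(B'))$ by $(2N+1)\mu(X)$. The only cosmetic difference is that you spell out the re-indexing argument behind that bound, which the paper leaves implicit.
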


In the sequel, we will need the following definition. Let $X$ be a metric space and $f:X\to X$ be a map. We say that $x\in X$ is \textit{recurrent} (for $f$) if for every open set $U$ containing $x$, there exists an integer $n\ge 1$ such that
$f^n(x)\in U$.  

\begin{corollary}\label{recpc} Let $(X, \cB, \mu, f)$ be a measurable system where $X$ is a metric space and $\mu$ is a Borel finite measure. Let $T_f: L^p(X) \rightarrow L^p(X)$ be the associated composition operator. If the set of recurrent points of $f$ has $\mu$-measure zero, then $f$ satisfies Condition (SC) and hence $T_f$ is chaotic, topologically mixing and frequently hypercyclic.
\end{corollary}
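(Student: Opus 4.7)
The plan is to establish Condition (SC) and then invoke Theorem~\ref{thm:FinMeasSCChar}, which already packages chaos, topological mixing, and frequent hypercyclicity into its conclusion. I would attack this via the Hopf decomposition $X = \mathcal{C}(f) \sqcup \mathcal{D}(f)$: first show that $\mu(\mathcal{C}(f)) = 0$, then produce the required subset $B'$ directly from a wandering generator of $\mathcal{D}(f)$.

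To prove $\mu(\mathcal{C}(f)) = 0$, I would appeal to Halmos's recurrence theorem: on the nonsingular conservative system $\big(\mathcal{C}(f), f\vert_{\mathcal{C}(f)}\big)$, for every measurable $A$ of positive $\mu$-measure, $\mu$-almost every point of $A$ returns to $A$ infinitely often. Taking a countable open basis $\{U_n\}_{n \ge 1}$ of $X$ (separability of $X$, or of $\operatorname{supp}\mu$, enters here), applying Halmos to each $U_n \cap \mathcal{C}(f)$, and unioning the countably many null exceptional sets, I obtain a single $\mu$-null set $N \subseteq \mathcal{C}(f)$ such that every $x \in \mathcal{C}(f) \setminus N$ revisits each $U_n$ containing it and is therefore topologically recurrent for $f$. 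The hypothesis that the recurrent set has $\mu$-measure zero then forces $\mu(\mathcal{C}(f)) \le \mu(N) = 0$.

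For the verification of (SC), pick a wandering generator $W$ of $\mathcal{D}(f)$. Because the iterates $f^n(W)$ are pairwise disjoint in $X$ and $\mu(X) < \infty$, the masses $\mu(f^n(W))$ are summable with total at most $\mu(X)$. Given $B \in \cB$ and $\epsilon > 0$ (here $\mu(B) < \infty$ is automatic), decompose $B \cap \mathcal{D}(f) = \bigsqcup_{k \in \mathbb{Z}} B_k$ with $B_k := B \cap f^k(W)$, choose $M$ large enough that $\sum_{|k|>M} \mu(B_k) < \epsilon$, and set $B' := \bigsqcup_{|k| \le M} B_k \subseteq B$. Then $\mu(B \setminus B') \le \mu(B \cap \mathcal{C}(f)) + \epsilon = \epsilon$, and since $f^n(B_k) \subseteq f^{n+k}(W)$ while the translates $f^m(W)$ are pairwise disjoint,
\begin{equation*}
\sum_{n \in \mathbb{Z}} \mu\bigl(f^n(B')\bigr) = \sum_{|k| \le M} \sum_{n \in \mathbb{Z}} \mu\bigl(f^n(B_k)\bigr) \le (2M+1)\,\mu(X) < \infty,
\end{equation*}
which is precisely (SC).

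The main obstacle is the first step: converting the measure-theoretic Halmos recurrence on $\mathcal{C}(f)$ into \emph{topological} recurrence, so that the assumption on the topologically recurrent set can be cashed in. This is exactly where separability of the underlying space (a countable open basis) is indispensable; without it, the bridge between the ergodic-theoretic and topological notions of recurrence breaks down. Once $\mathcal{C}(f)$ is known to be $\mu$-null, the verification of (SC) reduces to the bookkeeping displayed above with the wandering generator $W$.
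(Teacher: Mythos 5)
Your argument is correct, but it takes a genuinely different route from the paper. The paper verifies (SC) directly and metrically: assuming $B$ contains no recurrent points, it exhausts $B$ by the increasing sets $A_n$ of points whose forward orbit never re-enters the $\tfrac1n$-ball around them, picks a compact $K\subseteq A_N$ with $\mu(A_N\setminus K)$ small, covers $K$ by finitely many balls of radius $\tfrac{1}{2N}$, and observes that each piece $K_i=U_i\cap K$ is then a wandering set, so $\sum_{n\in\mathbb{Z}}\mu(f^n(K))\le r\mu(X)<\infty$. You instead go through the Hopf decomposition: you kill the conservative part by converting Halmos-type measure-theoretic recurrence into topological recurrence via a countable base, and then, with $f$ dissipative and $\mu(X)<\infty$, your (SC) bookkeeping with the wandering generator $W$ is exactly the proof of Theorem~\ref{thm:FinMeasSCChar}, which you could simply cite at that point. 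Your approach is more modular (it isolates the one new fact, $\mu(\mathcal{C}(f))=0$, and delegates the rest) and it imports the standard recurrence theorem for conservative nonsingular maps, which the paper never needs; the paper's approach is self-contained and never mentions conservativity. Each version leans on an unstated regularity hypothesis: you need a countable base (or separable support of $\mu$) to pass from recurrence on basis elements to topological recurrence, while the paper needs inner regularity of $\mu$ by compact sets to extract $K$; neither is automatic for an arbitrary metric space with a finite Borel measure, so your caveat about separability is fair but does not put you in worse shape than the original. Two cosmetic points: your estimate $\mu(B\setminus B')\le\mu(B\cap\mathcal{C}(f))+\epsilon=\epsilon$ should end with a strict inequality (or start from $\epsilon/2$), and you should make explicit that dissipativity of $f\vert_{\mathcal{D}(f)}$ together with $\mu(\mathcal{C}(f))=0$ lets you treat $f$ itself as dissipative after discarding a null set, which is harmless since (SC) is a measure-theoretic condition.
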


\begin{corollary}[(SC) Characterization,  $f$ dissipative]\label{cor:chaotic}
Let $(X, \cB, \mu, f)$ be a dissipative system and $T_f: L^p(X) \rightarrow L^p(X)$ be the associated composition operator. Then, the following statements are equivalent.
\begin{enumerate}[label=(\alph*), align=left, leftmargin=*]
  \item $f$ satisfies Condition (SC);
    \item $T_f$ is chaotic;
    \item $T_f$ has dense set of periodic points. \end{enumerate}
\end{corollary}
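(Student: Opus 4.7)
The plan is to deduce this corollary almost entirely from Theorem~\ref{thm:GenSCChar} together with the definition of chaos, by establishing the cycle $(a)\Rightarrow (b)\Rightarrow (c)\Rightarrow (a)$.

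First I would observe that the implication $(a)\Rightarrow(b)$ is immediate: the ``moreover'' clause of Theorem~\ref{thm:GenSCChar} asserts that condition (SC) for $f$ implies that $T_f$ is chaotic (in fact chaotic, topologically mixing and frequently hypercyclic), so no additional work is needed here.

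Next, $(b)\Rightarrow(c)$ is a tautology of the definitions given in Section~2: by definition, the operator $T_f$ being chaotic requires that the set of periodic points of $T_f$ be dense in $L^p(X)$, which is exactly condition (c).

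For the final implication $(c)\Rightarrow(a)$, I would invoke Theorem~\ref{thm:GenSCChar} in its forward direction. That theorem gives the equivalence of (SC) with the conjunction ``$f$ is dissipative \emph{and} $T_f$ has a dense set of periodic points.'' Under the standing hypothesis of the corollary, $f$ is already assumed to be dissipative, so condition (c) supplies precisely the missing half of the conjunction, and we conclude that $f$ satisfies (SC). There is no genuine obstacle here beyond checking that the dissipativity assumption of the corollary matches the dissipativity clause in Theorem~\ref{thm:GenSCChar}, so the proof is essentially a one-line appeal to the preceding theorem combined with the definition of chaotic operator.
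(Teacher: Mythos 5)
Your proposal is correct and follows exactly the same route as the paper: $(a)\Rightarrow(b)$ via the ``moreover'' clause of Theorem~\ref{thm:GenSCChar}, $(b)\Rightarrow(c)$ by the definition of chaos, and $(c)\Rightarrow(a)$ by feeding the standing dissipativity hypothesis into the equivalence of Theorem~\ref{thm:GenSCChar}. Nothing is missing.
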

\begin{proof}
	That (a) implies (b) follows from Theorem~\ref{thm:GenSCChar}. That (b) implies (c) is simply the definition. That (c) implies (a) follows from Theorem~\ref{thm:GenSCChar}.
\end{proof}

\subsection{Bounded Distortion and Frequent Hypercyclicity}
\begin{theorem}[Necessary condition for frequent hypercyclicity] \label{thm:FreqHyPnec} Let $(X,\mathcal{B},\mu, f)$ be a measurable system with associated composition operator $T_f$ frequently hypercyclic. Then for every wandering set $W$ with positive finite $\mu$-measure, the following inequality holds
 $$\sum_{n\in\mathbb{Z}} \left\Vert\dfrac{\mathrm{d}\mu}{\mathrm{d}\big(\mu\circ f^n\big)}\bigg|_W \right\Vert_{\infty}^{-1}<\infty\cdot$$
\end{theorem}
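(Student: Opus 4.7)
The plan is to combine a change-of-variables identity, the pairwise disjointness of the family $\{f^n(W)\}_{n\in\mathbb{Z}}$, and the defining property of a frequently hypercyclic vector: its forward orbit visits every non-empty open set with positive lower density.

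Write $h_n = \frac{d\mu}{d(\mu\circ f^n)}\big|_W$ and $a_n = \|h_n\|_\infty^{-1}$. For any $\varphi\in L^p(X)$ and $n\in\mathbb{Z}$, the substitution $y=f^n(x)$ together with the identification $\frac{d(\mu\circ f^{-n})}{d\mu}\big|_{f^n(W)} = h_n\circ f^{-n}$ (whose essential supremum equals $\|h_n\|_\infty$ since $f^n$ is bijective bimeasurable) yields
\[a_n\int_W|T_f^n\varphi|^p\,d\mu \;\le\; \int_{f^n(W)}|\varphi|^p\,d\mu.\]
Summing over $n\in\mathbb{Z}$ and using that the $f^n(W)$ are pairwise disjoint produces the universal estimate
\[\sum_{n\in\mathbb{Z}}a_n\int_W|T_f^n\varphi|^p\,d\mu \;\le\; \|\varphi\|_p^p,\qquad \varphi\in L^p(X).\]

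Taking $\varphi$ to be a frequently hypercyclic vector for $T_f$ and fixing $\epsilon<\mu(W)^{1/p}/4$, I consider the pairwise disjoint open balls $V_k=\{\psi:\|\psi-k\chi_W\|_p<\epsilon\}$ for $k\in\mathbb{Z}\setminus\{0\}$. Each set $N_k=\{n\ge 1:T_f^n\varphi\in V_k\}$ has positive lower density by frequent hypercyclicity, and the reverse triangle inequality forces $\int_W|T_f^n\varphi|^p\,d\mu\ge (|k|\mu(W)^{1/p}-\epsilon)^p$ for $n\in N_k$. Substituting into the universal estimate and using the disjointness of the $N_k$ gives
\[\sum_{k\neq 0}\bigl(|k|\mu(W)^{1/p}-\epsilon\bigr)^p\sum_{n\in N_k}a_n\;\le\;\|\varphi\|_p^p,\]
so $\sum_{n\in\bigcup_k N_k}a_n<\infty$ on a subset of $\mathbb{N}$ of positive lower density.

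The hardest step is to upgrade this partial summability to the full sum $\sum_{n\in\mathbb{Z}}a_n<\infty$ claimed in the theorem. I would address the positive indices not covered by $\bigcup_k N_k$ by enriching the family of targets, e.g.\ using linear combinations $\sum_{j\in F}c_j\chi_{f^j(W)}$ with finite $F\subset\mathbb{Z}_{\ge 0}$ (these lie in $L^p(X)$ since $(\star)$ implies $\mu(f^j(W))<\infty$ for $j\ge 0$), to furnish more disjoint open balls each hit with positive lower density. For negative indices, I would apply the same argument on the shifted wandering set $f^m(W)$ for $m\ge 0$, transferring summability at negative indices of $W$ into summability at non-negative indices of $f^m(W)$ via the universal estimate, and then let $m$ grow to capture every $n\in\mathbb{Z}$. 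Assembling these partial bounds into a single convergent sum is the delicate combinatorial core of the argument and the step where I expect the main obstacle to lie.
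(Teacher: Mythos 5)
Your first half is correct and is genuinely close in spirit to the paper's computation: the change-of-variables identity, the resulting estimate $a_n\int_W|T_f^n\varphi|^p\,d\mu\le\int_{f^n(W)}|\varphi|^p\,d\mu$, and the use of disjoint targets visited with positive lower density are all sound. But the proof is not complete, and the step you flag as ``the delicate combinatorial core'' is precisely where it fails. What you obtain is $\sum_{n\in M}a_n<\infty$ for some $M\subseteq\mathbb{N}$ of positive lower density. A set of positive lower density can have arbitrarily long gaps, and the only regularity available on $(a_n)$ from $(\star)$ is the one-sided bound $a_n\le c\,a_{n+1}$; across a gap of length $j$ this only gives $a_n\le c^{j}a_{n+j}$, which blows up as $j\to\infty$. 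So partial summability on $M$ cannot be bootstrapped to $\sum_{n\in\mathbb{Z}}a_n<\infty$. Enriching the family of targets does not repair this: each new target is again only visited on a set of positive lower density, and a countable union of such sets need not be cofinite (nor cover the negative indices at all), so no choice of targets forces the orbit to certify summability at \emph{every} $n$.

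The paper closes exactly this gap by extracting more from the \emph{single} target $\rchi_W$: for $n$ in the return set $A$, the orbit point $T_f^n\varphi$ is not only bounded below on $W$ but also uniformly small on every other piece $f^m(W)$, $m\neq 0$. Comparing the smallness at time $n$ on $f^m(W)$ with the lower bound at time $n+m$ on $W$ yields $\sum_{m\in A}d_{m-n}(W)<2$ for all $n\in A$ --- summability indexed by the difference set $A-A$ rather than by $A$. This difference-set structure, together with the regularity $d_{n+1}(W)\ge c^{-1}d_n(W)$, is exactly the hypothesis of a combinatorial lemma of Bayart and Ruzsa (\cite[Corollary 9]{BayRuz2015}), which then delivers the full two-sided convergence $\sum_{n\in\mathbb{Z}}d_n(W)<\infty$, negative indices included. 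Your universal estimate discards the ``small on $f^m(W)$'' information, which is why it cannot reach the difference-set structure; to complete your argument you would need to reinstate that information and invoke (or reprove) the Bayart--Ruzsa lemma.
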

\begin{theorem}[Frequent Hypercyclicity Characterization] \label{thm:FreqHyP}
Let $(X, \cB, \mu, f)$ be a dissipative system of bounded distortion and $T_f: L^p(X) \rightarrow L^p(X)$ be the associated composition operator. Then, the following statements are equivalent.
\begin{enumerate}[label=(\alph*), align=left, leftmargin=*]
    \item $f$ satisfies Condition (SC);
    \item $T_f$ is frequently hypercyclic;
    \item $T_f$ is chaotic.

\end{enumerate}
\end{theorem}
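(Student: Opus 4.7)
The plan is to close the cycle $(a) \Leftrightarrow (b) \Leftrightarrow (c)$ by reducing to results already established in the excerpt. The implications $(a) \Rightarrow (b)$ and $(a) \Rightarrow (c)$ are immediate from Theorem~\ref{thm:GenSCChar}, which shows that Condition (SC) alone implies chaos and frequent hypercyclicity. The implication $(c) \Rightarrow (a)$ is immediate from Corollary~\ref{cor:chaotic}, whose hypothesis (dissipativity) is part of the standing assumption here. All of the real content therefore concentrates in the implication $(b) \Rightarrow (a)$, which is the only place where the bounded distortion hypothesis must be invoked.

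For $(b) \Rightarrow (a)$, I would begin by applying Theorem~\ref{thm:FreqHyPnec} to the generating wandering set $W$ of finite positive $\mu$-measure furnished by the bounded distortion hypothesis. This yields $\sum_{n \in \mathbb{Z}} \left\Vert d\mu/d(\mu \circ f^n)|_W \right\Vert_\infty^{-1} < \infty$. Writing $\rho_n := d(\mu \circ f^n)/d\mu$ on $W$, a short Radon--Nikodym argument identifies each summand with $\operatorname{ess\,inf}_W \rho_n$. The bounded distortion estimates force $\rho_n$ to be essentially pinched between $K^{-1}\mu(f^n(W))/\mu(W)$ and $K\mu(f^n(W))/\mu(W)$ on $W$, and the lower pinch converts the Radon--Nikodym sum into the key geometric estimate
$$\sum_{n \in \mathbb{Z}} \mu(f^n(W)) < \infty.$$

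To verify (SC) for an arbitrary $B \in \cB$ with $\mu(B) < \infty$ and $\epsilon > 0$, I would exploit the partition $X = \bigsqcup_{k \in \mathbb{Z}} f^k(W)$ to write $B = \bigsqcup_{k \in \mathbb{Z}} f^k(C_k)$ with $C_k := f^{-k}(B \cap f^k(W)) \subseteq W$. Since $\sum_k \mu(B \cap f^k(W)) = \mu(B) < \infty$, I would truncate to a sufficiently large finite index set $F \subseteq \mathbb{Z}$ and set $B' := \bigsqcup_{k \in F} f^k(C_k)$ so that $\mu(B \setminus B') < \epsilon$. Applying bounded distortion at index $n+k$ to each $C_k$ gives $\mu(f^{n+k}(C_k)) \leq K\mu(C_k)\mu(f^{n+k}(W))/\mu(W)$, and summing over $n$ and $k \in F$ yields
$$\sum_{n \in \mathbb{Z}} \mu(f^n(B')) \leq \frac{K}{\mu(W)}\sum_{k \in F} \mu(C_k) \sum_{m \in \mathbb{Z}} \mu(f^m(W)) \leq K|F|\sum_{m \in \mathbb{Z}} \mu(f^m(W)) < \infty,$$
using $\mu(C_k) \leq \mu(W)$, which establishes (SC) for $B$.

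The main obstacle is the two-stage use of bounded distortion. The first stage upgrades the abstract Radon--Nikodym summability of Theorem~\ref{thm:FreqHyPnec} into the geometric summability of $\mu(f^n(W))$; the second stage transfers that summability through the wandering-set partition to an arbitrary finite-measure $B$ after truncating to a finite index set $F$ so that the factor $\sum_{k \in F}\mu(C_k)$ remains finite. Both stages rely essentially on the constant $K$ being uniform over $n \in \mathbb{Z}$ and $C \in \cB(W)$, which is precisely the content of the bounded distortion hypothesis; without it, neither the lower pinch on $\rho_n$ nor the upper bound $\mu(f^{n+k}(C_k)) \leq K\mu(C_k)\mu(f^{n+k}(W))/\mu(W)$ would be available.
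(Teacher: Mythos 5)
Your proposal is correct and follows essentially the same route as the paper: $(a)\Rightarrow(b)$ and $(a)\Leftrightarrow(c)$ are delegated to Theorem~\ref{thm:GenSCChar} and Corollary~\ref{cor:chaotic}, and $(b)\Rightarrow(a)$ combines Theorem~\ref{thm:FreqHyPnec} with the bounded distortion estimate and the same truncation of $B$ to finitely many levels $f^k(W)$. The only (harmless) organizational difference is that the paper applies Theorem~\ref{thm:FreqHyPnec} to each translate $W_i=f^i(W)$, $|i|\le N$, via a two-stage $K^2$ distortion bound, whereas you apply it once to $W$ itself to get $\sum_{n\in\mathbb{Z}}\mu(f^n(W))<\infty$ and then push the pieces $C_k\subseteq W$ forward using a single factor of $K$.
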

\begin{corollary}\label{cor:invertible}
Let $(X, \cB, \mu ,f )$ be an invertible dissipative system of bounded distortion and $T_f ,T_{f^{-1}} :L^p(X) \rightarrow L^p(X)$ be the associated composition operators. Then, $T_f$ is frequently hypercyclic (respectively, chaotic) if and only if $(T_f)^{-1}$ is.
\begin{proof}
This follows from the fact that $(T_f)^{-1} = T_{f^{-1}}$.
\end{proof}
\end{corollary}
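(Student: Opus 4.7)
The plan is to deduce the corollary almost immediately from Theorem~\ref{thm:FreqHyP} by observing that every hypothesis and every condition appearing in it is manifestly symmetric under replacing $f$ by $f^{-1}$.

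First I would verify the identity $(T_f)^{-1}=T_{f^{-1}}$. Since the system is invertible, both $f$ and $f^{-1}$ satisfy $(\star)$, so $T_{f^{-1}}:L^p(X)\to L^p(X)$ is a bounded linear operator. For $\varphi\in L^p(X)$, a direct computation gives $T_f(T_{f^{-1}}\varphi)=(\varphi\circ f^{-1})\circ f=\varphi$ and similarly $T_{f^{-1}}(T_f\varphi)=\varphi$, so the two operators are indeed mutual inverses.

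Next I would check that $(X,\mathcal{B},\mu,f^{-1})$ is also a dissipative system of bounded distortion. If $W$ is a wandering set generating $X$ for $f$, then the collection $\{(f^{-1})^n(W)\}_{n\in\mathbb{Z}}=\{f^{-n}(W)\}_{n\in\mathbb{Z}}$ is the very same partition of $X$ reindexed by $n\mapsto -n$, so $W$ is a wandering set generating $X$ for $f^{-1}$ as well. The bounded distortion inequality in Definition~\ref{defnBD}(ii) is a statement about all $n\in\Z$, hence is invariant under this reindexing.

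Then I would observe that Condition (SC) itself is symmetric in $f$ and $f^{-1}$: given $B\in\mathcal{B}$ with $\mu(B)<\infty$ and $\epsilon>0$, the condition
\[
\mu(B\setminus B')<\epsilon\quad\text{and}\quad\sum_{n\in\Z}\mu\bigl(f^n(B')\bigr)<\infty
\]
is literally unchanged if one replaces $f$ by $f^{-1}$, since $\sum_{n\in\Z}\mu((f^{-1})^n(B'))=\sum_{n\in\Z}\mu(f^{-n}(B'))$ is the same bi-infinite sum. Hence $f$ satisfies (SC) if and only if $f^{-1}$ does.

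Combining these three points, Theorem~\ref{thm:FreqHyP} applied to both systems yields the chain
\[
T_f\text{ freq.\ hyp.}\iff f\text{ satisfies (SC)}\iff f^{-1}\text{ satisfies (SC)}\iff T_{f^{-1}}\text{ freq.\ hyp.},
\]
and the first identification gives $T_{f^{-1}}=(T_f)^{-1}$. The same chain with ``freq.\ hyp.''\ replaced by ``chaotic'' proves the chaos statement (and in fact this case can also be seen directly: chaos of an invertible operator is self-dual since periodic points and dense orbits are preserved under taking inverses). There is no genuine obstacle here; the work is entirely in verifying that the symmetry of (SC) and of bounded distortion under $f\leftrightarrow f^{-1}$ is as transparent as it looks, which is the only point where one might slip.
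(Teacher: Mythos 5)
Your proof is correct, and it is somewhat more explicit than the route the paper takes. The paper's entire proof is the observation that $(T_f)^{-1}=T_{f^{-1}}$; the intended argument (signalled in the introduction) is that the inverse of an invertible chaotic operator is chaotic -- periodic points of $T$ and $T^{-1}$ coincide, and transitivity is inverse-invariant since $T^k(U)\cap V\neq\emptyset$ iff $U\cap T^{-k}(V)\neq\emptyset$ -- so one passes through the chain $T_f$ freq.\ hyp.\ $\iff$ $T_f$ chaotic $\iff$ $T_{f^{-1}}$ chaotic $\iff$ $T_{f^{-1}}$ freq.\ hyp., using Theorem~\ref{thm:FreqHyP} at both ends. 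You instead route through the manifest symmetry of Condition (SC) under $f\leftrightarrow f^{-1}$, which is equally valid and arguably cleaner since it avoids invoking the self-duality of chaos. Both arguments require applying Theorem~\ref{thm:FreqHyP} to the system $(X,\cB,\mu,f^{-1})$, and hence require checking that this system is again a dissipative system of bounded distortion (that $f^{-1}$ satisfies $(\star)$ is exactly what invertibility of the system provides, and the wandering set and distortion bounds are invariant under the reindexing $n\mapsto -n$); this is a point the paper glosses over entirely and which you verify explicitly, which is the right thing to do.
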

We say that $f$ is \textit{ergodic} if every $f$-invariant set $A\in\mathcal{B}$ satisfies 
$\mu(A)=0$ or $\mu(X{\setminus}A)=0$.
\begin{corollary}\label{cor:ergodic} Let $(X,\mathcal{B},\mu, f)$ be a dissipative system with a purely atomic measure $\mu$. Furthermore, assume that $f$ is ergodic.  Then the following statements are equivalent.
\begin{enumerate}[label=(\alph*), align=left, leftmargin=*]
\item   $T_f$ is frequently hypercyclic;
\item  $T_f$ is chaotic;
\item  $\mu$ is finite.
\end{enumerate}
\end{corollary}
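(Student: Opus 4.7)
The plan is to reduce to a concrete one-orbit model and then invoke the results already proved in this section. First I would extract the structure of $(X,\mathcal{B},\mu,f)$. Since $\mu$ is purely atomic and $\sigma$-finite, $X$ decomposes, modulo null sets, into countably many atoms of finite positive measure. Since $f$ is bijective, bimeasurable, and nonsingular, it permutes these atoms. Ergodicity forces all atoms to lie in a single $f$-orbit, and dissipativity rules out finite cycles; hence $X = \{x_n : n \in \mathbb{Z}\}$ with $f(x_n) = x_{n+1}$ and weights $w_n := \mu(\{x_n\}) \in (0,\infty)$, so $\mu(X) = \sum_{n \in \mathbb{Z}} w_n$. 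The singleton $W := \{x_0\}$ is then a wandering set of finite positive $\mu$-measure that generates the system.

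With this model in hand, (c) $\Rightarrow$ (a), (b) is immediate from Theorem~\ref{thm:FinMeasSCChar}: a dissipative system with $\mu(X) < \infty$ satisfies Condition (SC), so $T_f$ is chaotic and frequently hypercyclic. For (b) $\Rightarrow$ (c), a chaotic operator has dense periodic points, so Theorem~\ref{thm:GenSCChar} yields that $f$ satisfies (SC). Applying (SC) to $B = \{x_0\}$ with any $\epsilon < w_0$ forces $B' = B$, since $B$'s only non-null measurable subset is itself, and the required summability $\sum_n \mu(f^n(B')) < \infty$ reads exactly $\sum_{n \in \mathbb{Z}} w_n < \infty$.

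For (a) $\Rightarrow$ (c) I would appeal to the necessary condition of Theorem~\ref{thm:FreqHyPnec} with $W = \{x_0\}$: on $W$ the Radon--Nikodym derivative $\mathrm{d}\mu/\mathrm{d}(\mu \circ f^n)$ equals the constant $w_0/w_n$, so its essential supremum is $w_0/w_n$, and the theorem's conclusion becomes $\sum_{n \in \mathbb{Z}} w_n/w_0 < \infty$, i.e., $\mu(X) < \infty$. The only genuine work lies in the first paragraph, in verifying the single-orbit model; the three implications are then short appeals to the machinery already established in this section.
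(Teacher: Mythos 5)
Your proof is correct and follows essentially the same route as the paper: the reduction to a single orbit of atoms generated by the wandering singleton $W=\{x_0\}$, the appeal to Theorem~\ref{thm:FinMeasSCChar} for (c)$\Rightarrow$(a),(b), and the use of Theorem~\ref{thm:FreqHyPnec} with the constant Radon--Nikodym derivative $w_0/w_n$ on $W$ for (a)$\Rightarrow$(c) all match the paper's argument. The only (harmless) divergence is your (b)$\Rightarrow$(c), where you extract $\sum_{n}w_n<\infty$ directly by applying Condition (SC) to the atom $B=\{x_0\}$ (which forces $B'=B$ up to a null set), whereas the paper closes a three-step cycle (b)$\Rightarrow$(a)$\Rightarrow$(c) via Corollary~\ref{cor:chaotic}; both are valid.
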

\begin{proof}
\item $(a)\implies (c)$ Assume that $T_f$ is frequently hypercyclic. Let $x\in X$ be an atom of $\mu$. Since $f$ is dissipative and ergodic, we have that $W=\{x\}$ is a  wandering set that generates $X$, thus $\mu(X)=\sum_{n\in\mathbb{Z}}\mu\big(f^n(W)\big)$. Applying Theorem \ref{thm:FreqHyPnec} and using the fact that $\textrm{d}\mu/\textrm{d}\big(\mu\circ f^n\big)$ on $W$ is  equal to the constant $\mu(W)/\mu\big(f^n(W)\big)$, we obtain that 
\[\frac{\mu(X)}{\mu(W)}=\sum_{n\in\mathbb{Z}}\frac{ \mu\big(f^n(W)\big)}{\mu(W)}=\sum_{n\in\mathbb{Z}} \left\Vert\dfrac{\mathrm{d}\mu}{\mathrm{d}\big(\mu\circ f^n\big)}\bigg|_W \right\Vert_{\infty}^{-1}<\infty,\]
implying that $\mu$ is finite. 
\item  $(c)\implies (b)$ It follows from Theorem \ref{thm:FinMeasSCChar} that $f$ satisfies Condition (SC) and hence it is chaotic.  

\item $(b)\implies (a)$  As $f$ is dissipative and $T_f$ chaotic, by Corollary \ref{cor:chaotic}, we have that $f$  satisfies Condition (SC) and hence it is frequently hypercyclic. 
\end{proof}

\section{Applications and Examples}
In this section we give some applications of our main theorems. We also give some examples which show that our theorems are sharp. 

The first result is an application of Corollary \ref{recpc}. It shows that a large class of natural, simple maps $f$ on $\R^d$ yields complex behavior of $T_f$. We recall that a linear isomorphism $L:\mathbb{R}^d\to\mathbb{R}^d$ is \textit{hyperbolic} if $L$ has no eigenvalue of modulus $1$.  
  \begin{theorem}\label{thm:App} Consider a measurable system $(\R ^d, \cB,\mu, f)$ where $\mu$ is a Borel measure on $\mathbb{R}^d$,
  $\mu(\R ^d) <\infty$, $\mu(\{0\})=0$ and $f$ is a hyperbolic linear isomorphism. Then, $T_f$ is chaotic, topologically mixing and frequently hypercyclic.
\end{theorem}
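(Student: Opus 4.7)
My plan is to reduce everything to Corollary \ref{recpc}. The hypotheses of that corollary are already in place: $\R^d$ is a metric space, $\mu$ is a Borel measure, and $\mu(\R^d)<\infty$. Consequently, the full conclusion (chaotic, topologically mixing and frequently hypercyclic) will follow as soon as I show that the set $\operatorname{Rec}(f)$ of recurrent points of $f$ has $\mu$-measure zero. I will in fact prove the stronger statement that $\operatorname{Rec}(f)=\{0\}$, which suffices since $\mu(\{0\})=0$ by assumption.

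To show this, I would first invoke the standard hyperbolic splitting. Since $f$ has no eigenvalue of modulus $1$, there is an $f$-invariant decomposition $\R^d = E^s\oplus E^u$ into (generalized) eigenspaces corresponding to eigenvalues of modulus $<1$ and $>1$, respectively. Passing to an adapted norm $\|\cdot\|$ on $\R^d$, one obtains constants $0<\lambda<1<\Lambda$ such that
\[
\|f^n x\| \le \lambda^n \|x\| \text{ for all } x\in E^s,\ n\ge 0, \qquad \|f^n x\| \ge \Lambda^n \|x\| \text{ for all } x\in E^u,\ n\ge 0.
\]
This is a textbook fact and is where the polynomial factors coming from Jordan blocks are absorbed into $\lambda$ and $\Lambda$.

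Next I would rule out recurrence at every nonzero point. Write $x = x^s + x^u$ with $x^s\in E^s$ and $x^u\in E^u$. If $x^u\neq 0$, then
\[
\|f^n x\| \ge \Lambda^n \|x^u\| - \lambda^n \|x^s\| \longrightarrow \infty,
\]
so the forward orbit of $x$ leaves every bounded neighborhood of $x$; hence $x\notin\operatorname{Rec}(f)$. If $x^u=0$ but $x^s\neq 0$, then $f^n x\to 0$; choosing any open neighborhood of $x$ whose closure excludes $0$ (possible because $x\neq 0$), only finitely many iterates return, so again $x\notin\operatorname{Rec}(f)$. Finally, $f(0)=0$ is a fixed point, which is trivially recurrent. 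Thus $\operatorname{Rec}(f)=\{0\}$, and since $\mu(\{0\})=0$, the set of recurrent points is $\mu$-null. Applying Corollary \ref{recpc} finishes the proof. I do not foresee any serious obstacle: the only ingredients are the hyperbolic splitting, which is purely linear algebra, and the measure-zero hypothesis on the origin, which is given.
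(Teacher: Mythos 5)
Your proposal is correct and follows essentially the same route as the paper: both invoke the hyperbolic splitting $\R^d=E^s\oplus E^u$ with an adapted norm to conclude that every orbit either tends to $0$ or to infinity, hence the recurrent set is $\{0\}$, which is $\mu$-null, and then apply Corollary~\ref{recpc}. Your write-up merely spells out in slightly more detail why convergence to $0$ rules out recurrence for $x\neq 0$, which the paper leaves implicit.
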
 
\begin{proof} By \cite[Propositions 2.9, 2.10]{JPWM}, there exist $f$-invariant subspaces $E^s$ and
$E^u$ of $\mathbb{R}^d$  with $\R ^d = E^s \bigoplus E^u$ and an adapted norm $\Vert\cdot\Vert_1$ on $\mathbb{R}^d$ with respect to which the map
$f_s=f\vert_{E^s}$ is a contraction and the map $f_u=f\vert_{E^u}$ is a dilation (thus its inverse is a contraction). In this way,
if $x\in\mathbb{R}^d$ then either $\lim_{n\to \infty} { \| f^n(x) \|}=0$ or $\lim_{n\to\infty} \Vert f^n(x)\Vert=\infty$ (with respect to any norm $\Vert \cdot\Vert$). Hence, the set of recurrent points of $f$ equals $\{0\}$, which has $\mu$-measure $0$ by hypothesis. Hence, by Corollary \ref{recpc}, the proof follows. 
\end{proof}

The following is a concrete application of Corollary \ref{recpc}.
\begin{example}\label{ex:SC}
 Let $f:\mathbb{R}\to\mathbb{R}$ be a non-identity affine map of the form $x\mapsto ax+b$, $ 0 < |a| \le 1$. Let $\mu$ be the probability measure on $\mathbb{R}$ defined by $\mu(J)=\frac12\int_J e^{-\vert t\vert } \mathrm{d}t$ for all interval $J\subseteq \mathbb{R}$. Then,  $T_f$ is chaotic, topologically mixing and frequently hypercyclic.
\end{example}
\begin{proof}
We first show that $(X,\mathcal{B},\mu,f)$ is a measurable
system, i.e., that 
Condition~(\ref{condition}) in Definition \ref{msystem} holds. Indeed, such is the case: 
\begin{equation*}\label{aux1}
 \frac{\mu\big(f(J)\big)}{\mu(J)}=
\frac{\frac12\int_{f(J)} e^{-\vert t\vert} \mathrm{d}t}{\frac12\int_J e^{-\vert t\vert}\mathrm{d}t}=\frac{|a|\int_J e^{-\vert a t +b\vert}\,\mathrm{d}t}{\int_J e^{-\vert t\vert}\,\mathrm{d}t} \ge \frac{|a| e^{-\vert b\vert}\int_J e^{-\vert a t \vert}\,\mathrm{d}t}{\int_J e^{-\vert t\vert}\,\mathrm{d}t} \ge |a| e^{-\vert b\vert},
\end{equation*}
where the last inequality follows from the fact $0 < \vert a \vert \le 1$ implies that $e^{-\vert a t\vert} \ge {e^{-\vert t\vert}}$ for all $t$.

The transformation $f$ has no recurrent points when $a=1$ as, in this case, $b\neq 0$ and $f^n(x)=x+ nb$ for all $x\in \mathbb{R}$ and $n\in\mathbb{N}$. For $a \neq 1$, some elementary computation shows that $f$ has exactly one recurrent point, namely the fixed point of $f$, $x = \frac{b}{1-a}$.  In either case, as $\mu$ is non-atomic, the set of recurrent points has measure zero.
By Corollary \ref{recpc}, we have that $T_f$ is chaotic, topologically mixing and frequently hypercyclic.   
\end{proof}

The next example shows that Corollary \ref{cor:chaotic} is sharp in the sense that the hypothesis of dissipativity cannot be removed. 
\begin{example}\label{Exe1} There exists a measurable system $(X, \cB, \mu,f)$ such that  $T_f: L^p(X) \rightarrow L^p(X)$ is chaotic but $f$ does not  satisfy Condition (SC).   
\end{example}
\begin{proof}
We use an example from our earlier work \cite{BDP}. In particular, our $X$ is an odometer and $f$ is the $+1$-map.

For $i \ge 1$, we let $\Z_i =\{0, \ldots, i-1 \}$ be integers modulo $i$. We let 
$X = \Pi_{i=1}^{\infty} A_i$,
where $A_i = \Z_2$ for $i$ even and $A_i = \Z_{2i}$ for $i$ odd. We put the discrete topology on $A_i$ and the associated product topology on $X$. Endowed with this topology, $X$ is homeomorphic to the Cantor space. 

We let $\cB$ be the collection of Borel subsets of $X$. We define a product measure $\mu$ on $X$ by defining  a probability measure $\mu_i$ on $A_i$ as follows: 
\begin{equation*}\label{even}
\mu_i(0)=\mu_i(1)=\dfrac12,\quad \textrm{if $i$ is even}
\end{equation*}
and
\begin{equation*}\label{muij}
\mu_i(j)=\begin{cases}
\dfrac{1-2^{-i}}{i} & \textrm{for $j\in \{0,\ldots,i-1\}$}\\[0.2in]
\phantom{aa}\dfrac{2^{-i}}{i} & \textrm{for $j\in \{i,\ldots,2i-1\}$}
\end{cases},\quad \textrm{if $i$ is odd.}
\end{equation*}
Hence, $X=(X,\mathcal{B},\mu)$ becomes a topological Borel probability space. 

The map $f:X\to X$ is simply the $+1$-map with carryover. It was shown in \cite{BDP} that $T_f$ is a well-defined continuous linear operator which is topologically transitive but not topologically mixing. We will show that $T_f$ has a dense set of periodic points, implying that $T_f$ is chaotic. As $T_f$ is not topologically mixing,  by Theorem \ref{thm:GenSCChar} we have that $f$ does satisfy Condition (SC).
  
  We recall that the open subsets of $X$ are countable unions of disjoint basic cylinders, i.e., sets of the form
$$[a_1,\ldots,a_i] := \left\{(x_1,\ldots,x_i,x_{i+1},\ldots) \in X : x_1 = a_1,...,x_i = a_i\right\}.$$
We claim that $\rchi_{[a_1,\ldots,a_i]}$ is a periodic point of $T_f$. In fact, if $N=\vert A_1\vert \cdot \vert A_2\vert \cdots \vert A_i\vert$, where $\vert A_j\vert$ denotes the cardinality of $A_j$, and $x=(x_1,x_2,\ldots)\in [a_1,\ldots,a_i]$,
then $f^{N}(x)=(x_1,x_2,\ldots,x_i,y_{i+1},\ldots)$, that is , $f^{N}(x)\in [a_1,\ldots,a_i]$. Hence, $f^{-N}\big([a_1,\ldots,a_i]\big)=[a_1,\ldots,a_i]$. In this way, $$T_f^N \rchi_{[a_1,\ldots,a_i]}=\rchi_{f^{-N}\big([a_1,\ldots,a_i]\big)}=\rchi_{[a_1,\ldots,a_i]},$$
implying that $\rchi_{[a_1,\ldots,a_i]}$ is a periodic point of $T_f$. 

Now it is easy to verify that the characteristic function of the finite union of cylinders is also a periodic point. From this one can easily show that the collection of simple functions of the form $\sum_{i=1}^m a_i \cdot \rchi_{C_i}$, where $C_i$ is the finite union of cylinders, is dense in $L^p(X)$, completing the proof. 
\end{proof}
The following example shows that there are simple situations where the full strength of Theorem~\ref{thm:GenSCChar} is realized. It also shows that the hypothesis of ergodicity is necessary in Corollary~\ref{cor:ergodic}.
\begin{example}\label{exefinal} There exists a dissipative system $(X, \cB, \mu,f)$ with a purely atomic  measure $\mu$ such that $\mu(X)=\infty$ and  $T_f$ satisfies Condition (SC). 
 \end{example}
 \begin{proof}
 Let $X=\mathbb{Z}\times \mathbb{Z}$ and $f:X\to X$ defined by $f\big((i,j)\big)=(i,j+1)$, $i,j\in\mathbb{Z}$. Let $\mathcal{B}=2^X$ be the discrete $\sigma$-algebra and $\mu:\mathcal{B}\to [0,\infty]$ be the $\sigma$-finite measure defined by
 $\mu\big(\{(i,j)\}\big)=2^{-\vert j\vert}$. Clearly, $\mu(X)=\infty$. 
 
 Now let us verify Condition (SC). Let $B\in\mathcal{B}$ and $\epsilon>0$ with $\mu(B)<\infty$. As $\mu(B)<\infty$, there is $L \ge 1$ such that
 $ \mu  \left (B {\setminus}  \left ( [-L, L ] \times [-L, L ]  \right ) \right ) < \epsilon$.  Let $B' = B \cap  ([-L, L ] \times [-L, L ]) $. Clearly, $\mu (B {\setminus} B' ) < \epsilon$. Next we observe that for any $(i, j) \in X$, we have that $\sum_{n \in \mathbb{Z}} \mu (f^n (i,j)) =3$. As $B' \subseteq [-L, L ] \times [-L, L ]$, we have that
 $\sum_{n \in \mathbb{Z}} \mu (f^n (B')) \le 3\cdot(2L+1)^2$. 
 \end{proof}

 \section{Proofs of the Main Results}
\begin{lemma}[Orlicz {\cite[Theorem 4.2.1]{Orlicz}}]\label{Orliczthm} Let $(X,\mathcal{B},\mu)$ be a $\sigma$-finite measure space. Let the series $\sum_{n\in\mathbb{N}} \varphi_n$ of elements of $L^p(X)$ converge unconditionally. Then for each $1\le p\le 2$, the series $\sum_{n\in\mathbb{N}} \Vert \varphi_n\Vert_p^2$ converges, and for each $2\le p<\infty$, the series $\sum_{n\in\mathbb{N}}\Vert \varphi_n\Vert_p^p$ converges.
\end{lemma}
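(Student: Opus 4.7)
The plan is to combine the standard randomization argument for unconditionally convergent series in Banach spaces with Khintchine's inequality and with the correct form of Minkowski's integral inequality for each range of $p$.

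First, by unconditional convergence of $\sum_{n\in\mathbb{N}} \varphi_n$ in $L^p(X)$, an application of the closed graph theorem (equivalently, the uniform boundedness principle) to the map from sign sequences $(\epsilon_n)\in\{\pm 1\}^{\mathbb{N}}$ into $L^p(X)$ given by $(\epsilon_n)\mapsto\sum_n\epsilon_n\varphi_n$ yields
$$M:=\sup_{F\subseteq\mathbb{N}\text{ finite},\ \epsilon\in\{\pm 1\}^F}\Bigl\Vert\sum_{n\in F}\epsilon_n\varphi_n\Bigr\Vert_p<\infty.$$
Raising to the $p$-th power, averaging over signs (with respect to the uniform probability measure on $\{\pm 1\}^F$), and interchanging the expectation with the integral over $X$ by Fubini gives
$$\int_X \mathbb{E}_\epsilon\Bigl|\sum_{n\in F}\epsilon_n\varphi_n(x)\Bigr|^p d\mu(x)\le M^p.$$
The lower bound in Khintchine's inequality supplies a universal constant $A_p>0$ with $\mathbb{E}_\epsilon|\sum_n\epsilon_n a_n|^p\ge A_p\bigl(\sum_n|a_n|^2\bigr)^{p/2}$ for any finite scalar sequence $(a_n)$; applying it pointwise in $x$ yields
$$\int_X\Bigl(\sum_{n\in F}|\varphi_n(x)|^2\Bigr)^{p/2}d\mu(x)\le M^p/A_p.$$

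Next I would split into the two cases. For $2\le p<\infty$, the pointwise inclusion $\ell^2\hookrightarrow\ell^p$ gives $\sum_n|\varphi_n(x)|^p\le\bigl(\sum_n|\varphi_n(x)|^2\bigr)^{p/2}$ at each $x$; integrating, applying the bound above, and letting $F\uparrow\mathbb{N}$ via monotone convergence produces $\sum_n\Vert\varphi_n\Vert_p^p\le M^p/A_p<\infty$. For $1\le p\le 2$, the exponent $p/2$ lies in $(0,1]$, and the reverse Minkowski integral inequality for nonnegative measurable integrands,
$$\Bigl(\int_X\bigl(\textstyle\sum_n g_n\bigr)^{p/2}d\mu\Bigr)^{2/p}\ge \sum_n\Bigl(\int_X g_n^{p/2}d\mu\Bigr)^{2/p},$$
applied with $g_n=|\varphi_n|^2$, yields $\sum_n\Vert\varphi_n\Vert_p^2\le(M^p/A_p)^{2/p}<\infty$.

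The main obstacle I anticipate is invoking the correct direction of Minkowski's integral inequality in the $1\le p\le 2$ regime: this reverse direction is available precisely because $p/2\le 1$ and the integrands are nonnegative, and it is really where the $\sigma$-finiteness hypothesis enters (together with its use in the Fubini interchange). Everything else is a uniformity argument followed by the classical scalar Khintchine inequality, so no further analytic input is required.
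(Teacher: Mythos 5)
The paper gives no proof of this lemma: it is quoted directly from Kadets--Kadets \cite[Theorem 4.2.1]{Orlicz}, and your argument is exactly the classical proof behind that reference --- uniform boundedness of the signed partial sums, Rademacher averaging combined with the lower Khintchine bound to control $\int_X\bigl(\sum_{n\in F}|\varphi_n|^2\bigr)^{p/2}\,\mathrm{d}\mu$, then the pointwise embedding $\ell^2\hookrightarrow\ell^p$ for $p\ge 2$ and the reverse Minkowski inequality with exponent $p/2\le 1$ for $1\le p\le 2$, followed by monotone convergence as $F\uparrow\mathbb{N}$. The steps are all correct; the only slight inaccuracy is your closing remark about where $\sigma$-finiteness enters: since $\{\pm 1\}^F$ is a finite probability space, the interchange of $\mathbb{E}_\epsilon$ with $\int_X$ is mere linearity of the integral, and the reverse Minkowski inequality for nonnegative integrands holds on arbitrary measure spaces, so $\sigma$-finiteness is not actually used --- it appears in the statement only as the paper's standing hypothesis.
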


\begin{lemma}\label{lem1proofs} Let $(X,\mathcal{B},\mu,f)$ be a measurable system.
If $f$ satisfies Condition (SC), then $f$ is dissipative.
\end{lemma}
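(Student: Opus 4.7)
The plan is to argue by contradiction via the Hopf decomposition $X = \mathcal{C}(f) \sqcup \mathcal{D}(f)$, aiming to show that the conservative part $\mathcal{C}(f)$ has measure zero. Suppose $\mu(\mathcal{C}(f)) > 0$. By $\sigma$-finiteness, we can pick a measurable set $B \subseteq \mathcal{C}(f)$ with $0 < \mu(B) < \infty$. Applying Condition (SC) with this $B$ and $\epsilon = \mu(B)/2$, we obtain a measurable $B' \subseteq B$ with $\mu(B') > \mu(B)/2 > 0$ and $\sum_{n \in \mathbb{Z}} \mu(f^n(B')) < \infty$. In particular, looking only at the terms with $n \leq -1$ (equivalently, at $\mu(f^{-n}(B'))$ for $n \geq 1$), we have $\sum_{n \geq 1} \mu(f^{-n}(B')) < \infty$.

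The first Borel--Cantelli lemma then says that the set $E = \limsup_{n \to \infty} f^{-n}(B') = \{x \in X : f^n(x) \in B' \text{ for infinitely many } n \geq 1\}$ satisfies $\mu(E) = 0$. On the other hand, since $B' \subseteq \mathcal{C}(f)$ has positive measure and $f|_{\mathcal{C}(f)}$ is conservative, the Poincar\'e recurrence theorem guarantees that for almost every $x \in B'$, the orbit $(f^n(x))_{n \geq 1}$ returns to $B'$ infinitely often, i.e., $B' \subseteq E$ up to a null set. This forces $\mu(B') \leq \mu(E) = 0$, contradicting $\mu(B') > 0$. Hence $\mu(\mathcal{C}(f)) = 0$, and $f$ is dissipative.

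The only nontrivial ingredients are the pointwise version of Poincar\'e recurrence on $\mathcal{C}(f)$ (which is standard: conservativity implies that a.e.\ point of a positive-measure set returns infinitely often, by iterating the first-return argument) and the careful bookkeeping that the bilateral sum in (SC) controls the forward orbit-visit counts through the negative-index terms $\mu(f^{-n}(B'))$. I do not expect any serious obstacle; the essential point is simply to recognize (SC) as a Borel--Cantelli-type summability that is incompatible with conservative recurrence on any set of positive finite measure.
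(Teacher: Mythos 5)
Your proof is correct. You follow the same overall strategy as the paper: Hopf decomposition, assume for contradiction a set $B\subseteq\mathcal{C}(f)$ of finite positive measure, apply (SC) to extract $B'\subseteq B$ of positive measure with $\sum_{n\in\mathbb{Z}}\mu(f^n(B'))<\infty$, and derive a contradiction with conservativity via pointwise Poincar\'e recurrence. The difference lies in how the contradiction is extracted. The paper first builds an ``eventually wandering'' set $W=B'\setminus\bigcup_{|n|\ge N}f^n(B')$ of positive measure (disjoint from all its iterates $f^n(W)$ with $|n|\ge N$), and then shows inline --- by discarding the null set of non-returning points and its orbit --- that almost every point of $W$ must return to $W$ infinitely often. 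You instead apply the first Borel--Cantelli lemma to the tail $\sum_{n\ge 1}\mu(f^{-n}(B'))<\infty$ to conclude that almost no point visits $B'$ infinitely often under forward iteration, and contrast this directly with the infinite-recurrence form of Poincar\'e's theorem on the conservative part. Your route is slightly cleaner and avoids the auxiliary set $W$, at the cost of invoking the pointwise infinite-recurrence theorem for conservative nonsingular maps as a black box (which is standard and available in the paper's reference to Aaronson); the paper's version is more self-contained, essentially re-deriving that recurrence statement in the two lines involving the set $A$ of non-returning points. Both arguments use only the negative-index (equivalently, $f^{-n}$ for $n\ge 1$) terms of the bilateral sum in (SC) for the contradiction, so there is no gap in either.
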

\begin{proof}
By the Hopf Decomposition Theorem, we may write
$X=\mathcal{C}(f)\cup\mathcal{D}(f)$ as the union of the conservative and the dissipative parts of $f$, respectively. We will prove that
$\mu\big(\mathcal{C}(f)\big)=0$. By way of contradiction,
suppose that $B\subseteq \mathcal{C}(f)$ is a measurable set
with $0<\mu(B)<\infty$. 

We claim that there exist $W\subseteq{B}$ and $N\in\mathbb{N}$ such that $\mu(W)>0$ and $f^n(W)\cap W=\emptyset$ for all $n\in\mathbb{N}$ satisfying $\vert n\vert\ge N$. By  Condition (SC), there exist
a measurable set $B'\subseteq B$ and $N\in\mathbb{N}$ such that
$$ \mu(B{\setminus} B')<\dfrac{\mu(B)}{4}\quad\textrm{and}\quad \sum_{\vert n\vert\ge N} \mu\big(f^n(B')\big)<\dfrac{\mu(B)}{4}.
$$
Let $W=B'{\big\backslash} \bigcup_{\vert n\vert \ge N} f^n(B')$. Notice that $$\mu(B'{\setminus} W)\le \sum_{\vert n\vert\ge N} \mu\big(f^n(B')\big)<\frac{\mu(B)}{4}.$$
In this way, $\mu(B{\setminus} W)<\mu(B)/2$, which yields
$\mu(W)>0$.
Moreover, $f^n(W)\cap W=\emptyset$ for all $n\in\mathbb{Z}$ satisfying
$\vert n\vert \ge N$. This proves the claim.

{Now let $A=\{x\in W: f^n(x)\not\in W,\forall n\ge 1\}$. Since $A\subseteq \mathcal{C}(f)$, we have that $\mu(A)=0$. Hence, the set $W'=W{\setminus}\bigcup_{n\in\mathbb{Z}}f^n(A)$ has positive $\mu$-measure. Moreover, if $x\in W'$, then $f^n(x)\in W$ for infinitely many positive $n$, contradicting the claim.} Therefore,
$\mu\big(C(f)\big)=0$ and $f$ is dissipative.
\end{proof}

\subsection{Proof of Theorem \ref{SCFHC}} 

\noindent {\it Proof of $(SC)\implies (FHC)$.} Assume that $f$ satisfies (SC). We will show that
$T_f$ satisfies (FHC). By Lemma \ref{lem1proofs}, $f$ is dissipative. Let $W$ be a wandering set such that
$X=\bigcup_{n\in\mathbb{Z}} f^n(W)$. Let $\mathcal{H}=L^p(X)$ and $\mathcal{H}_0\subseteq\mathcal{H}$ be defined as follows: $\varphi\in\mathcal{H}_0$ if and only if there exist
$m\ge 1,
a_1,\ldots,a_m\in\mathbb{R}$, $k_1,\ldots,k_m\in\mathbb{Z}$, and pairwise disjoint measurable sets $B_1,\ldots,B_m$ such that
$\varphi=\sum_{i=1}^m a_i \rchi_{B_i}$ and, for each $1\le i\le m$, $B_i\subseteq f^{k_i}(W)$ and
 $\sum_{n\in\mathbb{Z}} \mu  \big(f^n(B_i) \big)<\infty$.

We will now show that $\mathcal{H}_0$ is dense in $\mathcal{H}$. 
Given $\epsilon>0$ and
$\psi\in\mathcal{H}$, let $\varphi=\sum_{j=1}^r a_j\rchi_{D_j}\in \mathcal{H}\backslash \{0\}$ be such that $D_1,\ldots,D_r$ are pairwise disjoint measurable sets with finite positive $\mu$-measures and
$\Vert \varphi-\psi \Vert_p<\epsilon/2$. Set $M=\max\{\vert a_j\vert: 1\le j\le r\}$, then $M>0$.
As $f$ satisfies Condition (SC) and $X=\bigcup_{n\in\mathbb{Z}}f^n(W)$, there exist an integer $N>0$
and measurable sets $C_1,\ldots,C_r$ such that, for each $1\le j\le r$,
$C_j\subseteq D_j\cap \bigcup_{\vert n\vert\le N} f^n(W)$, 
\begin{equation*}
\mu(D_j{\setminus}C_j)<\left(\dfrac{\epsilon}{2 r M}\right)^p\quad\textrm{and}\quad\sum_{n\in\mathbb{Z}} \mu\big(f^n(C_j)\big)<\infty.
\end{equation*}
Let $\varphi' =\sum_{j=1}^r a_j \rchi_{C_j}$. Since $C_j\subseteq \bigcup_{\vert n\vert\le N} f^n(W)$, we have that each $C_j$ is the  union of finitely many pairwise disjoint sets $B_i$ satisfying $B_i\subseteq f^{k_i}(W)$ for some $-N\le k_i\le N$. In this way, 
$\varphi'\in \mathcal{H}_0$. Moreover,
$$ \Vert \varphi -\varphi '\Vert_p=\left\Vert \sum_{j=1}^r a_j \big(\rchi_{D_j}-\rchi_{C_j}\big)\right\Vert_p=
\left\Vert \sum_{j=1}^r a_j \rchi_{D_j{\setminus C_j}}\right\Vert_p\le M\sum_{j=1}^r \left[\mu(D_j{\setminus} C_j)\right]^{\frac1p}<\dfrac{\epsilon}{2}.
$$
In this way, $\Vert \varphi' -\psi\Vert_p<\epsilon$, completing the proof of the denseness of $\cH_0$. 

We next show that $\sum_{n\ge 1} T_f^n\varphi$  is unconditionally convergent for all $\varphi=\sum_{i=1}^m a_i \rchi_{B_i} \in \cH_0$. Since $B_i\subseteq f^{k_i}(W)$, we have that the sets
$f^n(B_i)$, $n\in\mathbb{Z}$, are pairwise disjoint. Hence, for each sequence of integers $1\le n_1<n_2<\ldots$, we have that

\begin{equation}\label{5430}
\sum_{j\ge 1} T_f^{n_j}\varphi=\sum_{i=1}^m a_i \sum_{j\ge 1}  \rchi_{f^{-n_j}(B_i)}=\sum_{i=1}^m a_i \rchi_{\bigcup_{j\ge 1}f^{-n_j}(B_i)}.
\end{equation}

Since $\sum_{n\in\mathbb{Z}}\mu\big(f^n(B_i)\big)<\infty$ for all
$1\le i\le m$, we have that $\rchi_{\bigcup_{j\ge 1}f^{-n_j}(B_i)}\in L^p(X)$. Thus
$\sum_{j\ge 1}T_f^{n_j}\varphi$ converges for each sequence of integers $1\le n_1<n_2<\ldots$. Therefore,  $\sum_{n\ge 1} T_f^n\varphi$  is unconditionally convergent and Condition (a) in Theorem \ref{FHC} is true.

Given $\varphi=\sum_{i=1}^m a_i \rchi_{B_i}\in \mathcal{H}_0$, let $S\varphi=\sum_{i=1}^m a_i \rchi_{f(B_i)}.$ Since $f$ is bijective,  bimeasurable and non-singular, we have that  $S\varphi\in \mathcal{H}_0$ and  $S:\varphi\to S\varphi$ is a self-map on $\mathcal{H}_0$. Moreover,  $TS\varphi=\sum_{i=1}^m a_i \rchi_{f^{-1}(f(B_i))}=\varphi$, showing that Condition (c) in Theorem \ref{FHC} is true. By proceeding as in (\ref{5430}), we can show that
$\sum_{n\ge 1}S^n\varphi$ is unconditionally convergent for all $\varphi\in\mathcal{H}_0$. We have proved that $T_f$ satisfies (FHC). \\

\noindent\textit{Proof of $(FHC)\implies (SC)$ when $p\ge 2$.} Assume the hypothesis, i.e., $T_f$ satisfies the Frequent Hypercyclicity Criterion (FHC). Let $\cH_0 \subseteq L^p(X)$ be as in the statement of (FHC). Let $0<\epsilon<\frac12$ and $B\in \mathcal{B}$ with $\mu(B)<\infty$. By the denseness of $\cH_0$, there is $\varphi\in\mathcal{H}_0$ such that \[\left\Vert \varphi-\rchi_B \right\Vert_p< {\epsilon^{\left(1+\frac1p\right)}}\quad.\]
 Let 
\[
\quad B'=\left\{x\in B: \vert\varphi(x)-1\vert \le{\epsilon}\right\}.\]
Then, 
$$  {\epsilon}\left[\mu(B{\setminus}B')\right]^{\frac1p}\le \left(\int_{B{\setminus B'}}  \vert \varphi-1\vert^p\,\textrm{d}\mu\right)^{\frac1p}=\left(\int_{B{\setminus B'}}  \vert \varphi-\rchi_B\vert^p\,\textrm{d}\mu\right)^{\frac1p}\le \Vert \varphi-\rchi_B\Vert_p< {\epsilon^{\left(1+\frac1p\right)}},
$$
showing that $\mu(B{\setminus}B')<\epsilon$.

 We next show that $\sum_{n \ge 1} \mu (f^{-n}(B')) < \infty$.
Proceeding as earlier we have that for any $n\ge 1$,
\[(1-{\epsilon})\left[\mu\big(f^{-n}(B')\big)\right]^{\frac1p} \le \left(\int_{f^{-n}(B')} \left\vert \varphi\circ f^n\right\vert^p \textrm{d}\mu\right)^{\frac1p}
\]
implying that 
\[\sum_{n\ge 1} \mu\big(f^{-n}(B')\big)< 2^p\sum_{n\ge 1}\left(\int_{f^{-n}(B')} \left\vert \varphi\circ f^n\right\vert^p \textrm{d}\mu\right) \le 2^p\sum_{n\ge 1} \Vert T_f^n\varphi\Vert_p^p.\]
 Since $\sum_{n\ge 1} T_f^n\varphi$ converges unconditionally and $p\ge 2$, by Theorem \ref{Orliczthm} we have that
  $$\sum_{n\ge 1} \mu\big(f^{-n}(B')\big)<\infty.$$

 We next show that $\sum_{n \ge 1} \mu (f^{n}(B')) < \infty$. 
 As $\{\varphi, S\varphi, \ldots, S^{n}\varphi\}\subset\mathcal{H}_0$ and $T_fS$ is the identity map on $\cH_0$, we have that $T_f^nS^n(\varphi) = \varphi$ for all $n \ge 1$.
Since $f$ is bijective, bimeasurable and non-singular, we have that 
 \[ \left [T_f^nS^n(\varphi)|_B = \varphi|_B \right ] \Longrightarrow  \left [S^n(\varphi)\circ f^n| _B  = \varphi|_B \right ] \Longrightarrow \left [  S^n (\varphi)|_{f^n(B)}  = \varphi \circ f^{-n}|_{f^n(B)} \right ].
 \]
Using this we have that 
\[(1-{\epsilon})\left[\mu\big(f^{n}(B')\big)\right]^{\frac1p} \le\left(\int_{f^{n}(B')} \left\vert \varphi\circ f^{-n}\right\vert^p \textrm{d}\mu\right)^{\frac1p}=\left(\int_{f^n(B')}\left\vert S^n (\varphi)\right\vert^p \textrm{d}\mu\right)^{\frac1p} \le \left\Vert S^n\varphi\right\Vert_p.\]
Since $\sum_{n\ge 1} S_f^n\varphi$ converges unconditionally and $p\ge 2$, by Lemma \ref{Orliczthm} 
we have that 
\begin{equation*}
\sum_{n\ge 1} \mu\big(f^{n}(B')\big)\le 2^p\sum_{n\ge 1}\left\Vert S^n(\varphi)\right\Vert_p^p<\infty,
\end{equation*} which completes the proof.

\subsection{Proof of Theorem \ref{thm:GenSCChar}}

\noindent\noindent {\it Proof of} $(a) \implies (b)$. Suppose that $f$ satisfies Condition (SC). Then, by Lemma \ref{lem1proofs}, $f$ is dissipative. By Theorem \ref{SCFHC}, $T_f$ satisfies (FHC). Hence, by Theorem \ref{FHC}, 
$T_f$ is frequently hypercyclic, topologically mixing and chaotic. In particular, $T_f$ has a dense set of periodic points. \\

\noindent{\em Proof of $(b) \Longrightarrow (a)$.} Since $f$ is dissipative, there exist a wandering set
$W\in\mathcal{B}$ of positive $\mu$-measure such that
 $X=\bigcup_{n\in\mathbb{Z}} f^n(W)$. Let $\epsilon>0$ and $B\in\mathcal{B}$ with $\mu(B)<\infty$. Let $n_0\in\mathbb{N}$ be such that if
 $$B_0=B\cap\bigcup_{\vert k\vert\le n_0}f^k(W),\quad\textrm{then}\quad \mu(B{\setminus} B_0)<\dfrac{\epsilon}{2}.$$
 By hypothesis, there exists a periodic point $\varphi\in L^p(X)$ of $T_f$ such that 
 $$ \Vert \varphi-\rchi_{B_0}\Vert_p^p\le \dfrac{1}{4^p}\cdot \dfrac{\epsilon}{2}.
 $$
 For $B'=\left\{x\in B_0: |\varphi(x)|>\dfrac{3}{4}\right\}$, we have that $\mu(B_0{\setminus} B')\le\dfrac{\epsilon}{2}$ because
 $$ \dfrac{1}{4^p} \mu(B_0{\setminus}B')\le       \int_{B_0{\setminus}B'} \left| \varphi-1\right|^p \textrm{d}\mu\le \Vert \varphi-\rchi_{B_0}\Vert_p^p\le \dfrac{1}{4^p}\cdot \dfrac{\epsilon}{2}, \quad \textrm{implying}\quad \mu\big(B_0{\setminus}B'\big)\le \frac{\epsilon}{2}.
 $$ 
 In this way, 
 $$\mu(B{\setminus}B')= \mu(B{\setminus}B_0)+\mu(B_0{\setminus}B')< \epsilon.$$
 Let $N\ge 2n_0+1$ be such that $T_f^N \varphi=\varphi\circ f^N=\varphi$ $\mu$-a.e. Hence, since $f$ is bijective,
 we have that $\varphi\circ f^{kN}=\varphi$ $\mu$-a.e. for all $k\in\mathbb{Z}$. Because $B'\subseteq B_0$ and $N\ge 2n_0+1$, we have that the sets in the family $\left\{ f^{kN}(B'):k\in\mathbb{Z}\right\}$ are pairwise disjoint. Moreover, 
 \begin{eqnarray*}
 \left(\dfrac{3}{4}\right)^p\sum_{k\in\mathbb{Z}} \mu\big(f^{kN}(B')\big) &\le &
  \sum_{k\in\mathbb{Z}}\int_{f^{kN}(B')} \vert \varphi\circ f^{-kN}\vert^p \textrm{d}\mu=
 \sum_{k\in\mathbb{Z}}\int_{f^{kN}(B')} \vert \varphi\vert^p \textrm{d}\mu =\\&=&\int_{\bigcup_{k\in\mathbb{Z}} f^{kN}(B')} \vert \varphi\vert^p \textrm{d}\mu\le \int |\varphi|^p \textrm{d}\mu,
 \end{eqnarray*}
 showing that $\sum_{k\in\mathbb{Z}} \mu\big(f^{kN}(B')\big)<\infty$. 
By (\ref{condition}), we have that
 $$ \sum_{n\in\mathbb{Z}}\mu(f^n(B'))=\sum_{i=0}^{N-1}\sum_{k\in\mathbb{Z}}\mu(f^{kN-i}(B'))\le N \max\, \{1,c,\ldots,c^{(N-1)}\}\cdot \sum_{k\in\mathbb{Z}}\mu\big(f^{kN}(B')\big)<\infty.
 $$ 
 This proves that Condition (SC) is true, which concludes the proof of $(b)\implies (a)$.

\subsection{Proof of Theorem \ref{thm:FinMeasSCChar}}

\noindent{\em Proof of $(a)\implies (b)$}. This is Lemma \ref{lem1proofs}.

\noindent{\em Proof of $(b)\implies (a)$}. Assume $f$ is dissipative and $\mu(X)<\infty$. Then,
there exists a wandering set $W$ such that $X=\bigcup_{n\in\mathbb{Z}}f^n(W)$. Let $\epsilon>0$ and $B\in\mathcal{B}$  with $\mu(B) < \infty$ be given. Let us verify Condition (SC). Let $N\in\mathbb{N}$ be such that
$\mu\left(B{\setminus} \bigcup_{\vert n\vert\le N} f^n(W)\right)<\epsilon$. Then $B'=B\cap \bigcup_{\vert n\vert\le N} f^n(W)$ verifies Condition (SC) because
$\mu(B{\setminus}B')<\epsilon$ and by the dissipativity of $f$, we have that
$$ \sum_{n\in\mathbb{Z}}\mu\big(f^n(B')\big)\le(2N+1)\mu(X)<\infty,
$$
which concludes the proof.

\subsection{Proof of Corollary \ref{recpc} }

The proof consists in verifying Condition (SC). Hence, to begin with,
let $\epsilon>0$ and $B\in\mathcal{B}$ with $0<\mu(B)<\infty$ be given. We may assume that $B$ has no recurrent points.
 For each $n\ge 1$, set
$$ A_n=\left\{x \in B: \{f(x), f^2(x),\ldots\}\cap B\left(x,\frac1n\right)=\emptyset\right\},
$$
where $B\left(x,\frac1n\right)=\{y\in X: d(y,x)<\frac1n\}$, where $d$ is the metric on $X$. Notice that $A_1\subseteq A_2\subseteq \ldots$
and $B=\bigcup_{n=1}^\infty A_n$. In this way, there exist $N$ large enough so that $\mu(B{\setminus} A_N)<\frac{\epsilon}{4}$. Let $K$ be a compact subset of $A_N$ such that $\mu(A_N{\setminus}K)<\frac{\epsilon}{4}$. Let $U_1, U_2,\ldots, U_r$ be a finite collection of balls of radius $\frac{1}{2N}$ such that $K\subseteq \bigcup_{i=1}^r U_i$. Since $U_i\cap K\subseteq A_N$, we have that if $x\in U_i\cap K$, then $\{f(x),f^2(x),\ldots\}\cap U_i=\emptyset$. In particular, if $K_i=U_i\cap K$, then the following statements are true: \begin{itemize}
 \item [(a)] $K=\bigcup_{i=1}^r K_i$;
 \item [(b)] $f^n(K_i)\cap K_i=\emptyset$ for all $n\ge 1$.
 \end{itemize}
The bijectivity of $f$ and (b) imply that $W$ is a wandering set. Hence,
$$\sum_{n\in\mathbb{Z}} \mu\big( f^n(K_i)\big)\le  \mu(X).
$$
In this way,
$$ \mu(B{\setminus}K) <\epsilon\quad\textrm{and}\quad
\sum_{n\in\mathbb{Z}} \mu\big(f^n(K)\big)\le r\mu(X)<\infty.
$$
Thus, Condition (SC) is true, which concludes the proof.

\subsection{Proof of Theorem \ref{thm:FreqHyPnec}}

Throughout this section, $(X,\mathcal{B},\mu,f)$ will denote a measurable system. Given a measurable set $W\subseteq X$ with finite positive $\mu$-measure, let
 $\big(d_n(W)\big)_{n\in\mathbb{Z}}$  be the sequence defined by
\begin{equation}\label{alfa}
d_n(W)=
\left\Vert\dfrac{\textrm{d}\mu}{\textrm{d}\big(\mu\circ f^n\big)}\bigg|_W \right\Vert_{\infty}^{-1},
\end{equation}
where we set $1/\infty=0$ so that $d_n(W)$ is well-defined even in the case in which $\left\Vert \frac{\mathrm{d}\mu}{\mathrm{d}
\big(\mu\circ f^n\big)}\bigg|_W\right\Vert_{\infty}=\infty$.

\begin{lemma}\label{dnc} Let $c>0$ be as in ($\star$) and let
 $W$ be a measurable set with finite positive $\mu$-measure. Then,  $d_{n+1}(W)\ge c^{-1} d_n(W)$ for all $n\in\mathbb{Z}$.
\end{lemma}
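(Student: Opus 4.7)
The plan is to reinterpret $d_n(W)$ as the best constant in a geometric measure inequality and then deduce the recursion from $(\star)$.

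First, I would unpack the Radon--Nikodym derivative. Let $\nu_n$ be the measure defined on $\mathcal{B}$ by $\nu_n(B)=\mu(f^n(B))$; this is a well-defined measure because $f$ is bijective and bimeasurable, and by nonsingularity (combined with bijectivity) $\mu\ll\nu_n$ with $g_n := d\mu/d\nu_n$. So $M_n := \|g_n|_W\|_\infty$ is the smallest constant for which
\[
\mu(C)=\int_C g_n\, d\nu_n \le M_n\,\nu_n(C)=M_n\,\mu(f^n(C))
\]
holds for every measurable $C\subseteq W$. Equivalently, $d_n(W)=1/M_n$ is the largest constant $\alpha\ge 0$ such that $\mu(f^n(C))\ge \alpha\,\mu(C)$ for every measurable $C\subseteq W$. (Note that if $M_n=\infty$ we set $d_n(W)=0$, which fits the convention $1/\infty=0$, and the inequality below holds trivially.)

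Next I would combine this characterization with $(\star)$. For any measurable $C\subseteq W$, apply $(\star)$ to the set $B=f^{n+1}(C)\in\mathcal{B}$; using bijectivity so that $f^{-1}(f^{n+1}(C))=f^n(C)$, this yields
\[
\mu(f^n(C))=\mu\!\left(f^{-1}(f^{n+1}(C))\right)\le c\,\mu(f^{n+1}(C)).
\]
Combined with the lower bound $\mu(f^n(C))\ge d_n(W)\,\mu(C)$ from the previous step, we obtain
\[
\mu(f^{n+1}(C))\ge c^{-1}\mu(f^n(C))\ge c^{-1}d_n(W)\,\mu(C).
\]
Since $C\subseteq W$ was arbitrary, the optimality characterization of $d_{n+1}(W)$ gives $d_{n+1}(W)\ge c^{-1}d_n(W)$, as desired.

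This works uniformly for $n\in\mathbb{Z}$ (positive, zero, or negative), because at no point do we need $n\ge 0$; only that $f^{n+1}(C)\in\mathcal{B}$ and that $(\star)$ can be applied to it, both of which are consequences of bimeasurability. There is no real obstacle here: the only step requiring a short justification is the conversion between the essential-sup formulation of $d_n(W)$ and the measure-ratio formulation, which is a standard Radon--Nikodym manipulation.
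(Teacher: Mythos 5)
Your proof is correct and follows essentially the same route as the paper: both arguments come down to applying $(\star)$ to $B=f^{n+1}(C)$ and using bijectivity to get $\mu(f^n(C))\le c\,\mu(f^{n+1}(C))$. The only difference is cosmetic --- the paper runs the chain rule for Radon--Nikodym derivatives and then takes essential suprema over $W$, whereas you use the equivalent characterization of $d_n(W)$ as the optimal constant $\alpha$ in $\mu(f^n(C))\ge\alpha\,\mu(C)$ for measurable $C\subseteq W$.
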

\begin{proof} By ($\star$), $\mu\big(f^n(B)\big)\le c\mu\big( f^{n+1}(B)\big)$ for all $B\in\mathcal{B}$ and $n\in\mathbb{Z}$. Hence, the Radon-Nikodym derivatives $\textrm{d}\big(\mu\circ f^n\big)/\textrm{d}\big(\mu\circ f^{n+1}\big)$, $n\in\mathbb{Z}$, are  bounded by $c$ at $\mu\circ f^{n+1}$-almost every point, and therefore, $\mu$-a.e. In this way, for all $n\in\mathbb{Z}$,
\begin{equation}\label{auxiliary1}
\dfrac{\textrm{d}\mu}{\textrm{d}\big(\mu\circ f^{n+1}\big)}=\dfrac{\textrm{d}\mu}{\textrm{d}\big(\mu\circ f^n\big)}
\cdot\dfrac{\textrm{d}\big(\mu\circ f^{n}\big)}{\textrm{d}\big(\mu\circ f^{n+1}\big)}\le c \dfrac{\textrm{d}\mu}{\textrm{d}\big(\mu\circ f^n\big)} \quad \mu\textrm{-a.e.}
\end{equation}
Therefore, for all $n\in\mathbb{Z}$, we have that
\begin{equation}\label{er56}
 \dfrac{1}{d_{n+1}(W)}=\left\Vert \dfrac{\textrm{d}\mu}{\textrm{d}\big(\mu\circ f^{n+1}\big)}\bigg|_W\right\Vert_{\infty}\le c \left\Vert \dfrac{\textrm{d}\mu}{\textrm{d}\big(\mu\circ f^{n}\big)}\bigg|_W\right\Vert_{\infty}=c \dfrac{1}{d_n(W)},
\end{equation}
showing that $d_{n+1}(W)\ge c^{-1} d_n(W)$ for all $n\in\mathbb{Z}$.

\end{proof}

 \begin{lemma}\label{<17} Suppose that $T_f$ is frequently hypercyclic. 
  If $W$ is a wandering set with finite positive $\mu$-measure, then there exists a set $A\subset\mathbb{N}$ with positive lower density such that for each $n\in A$,
$$\sum_{m\in A}  d_{m-n}(W)<2.$$

 \end{lemma}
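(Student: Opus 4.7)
The plan is to produce $A$ from a frequently hypercyclic vector $\varphi$ for $T_f$. Fix $\eta \in (0,1)$ small and $\delta \in (0,\eta)$ much smaller (both to be chosen at the end), and set
\[
A = \{n \geq 1 : \Vert T_f^n \varphi - \rchi_W \Vert_p < \delta\}.
\]
By frequent hypercyclicity $A$ has positive lower density; it remains to verify the summability condition at each $n_0 \in A$.

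Fix $n_0 \in A$ and set $\psi := T_f^{n_0}\varphi$, so that $\Vert\psi\Vert_p \leq \mu(W)^{1/p} + \delta$. The crucial identity, valid since $f$ is bijective, is $T_f^m \varphi = \psi \circ f^{m - n_0}$ for every $m \in \mathbb{Z}$. Consequently, for every $m \in A$, writing $k = m - n_0 \in \mathbb{Z}$ (possibly negative), $\Vert \psi \circ f^k - \rchi_W \Vert_p < \delta$. Markov's inequality applied on $W$ produces
\[
B_k := \{x \in W : \psi(f^k(x)) \geq 1 - \eta\}, \qquad \mu(W \setminus B_k) \leq (\delta/\eta)^p,
\]
so that $\psi \geq 1 - \eta$ on the image $f^k(B_k) \subseteq f^k(W)$.

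Two ingredients now combine. First, $W$ being wandering forces $\{f^k(B_k)\}_{k \in A - n_0}$ to be pairwise disjoint. Second, the Radon--Nikodym definition of $d_k(W)$ yields the lower bound $\mu(f^k(B)) \geq d_k(W)\,\mu(B)$ for every measurable $B \subseteq W$ (this is the defining inequality extracted from $\Vert d\mu/d(\mu\circ f^k)|_W\Vert_\infty = 1/d_k(W)$). Assembling these,
\[
\Vert\psi\Vert_p^p \;\geq\; \sum_{m \in A} \int_{f^{m-n_0}(B_{m-n_0})} \psi^p\, d\mu \;\geq\; (1-\eta)^p \big(\mu(W) - (\delta/\eta)^p\big) \sum_{m \in A} d_{m-n_0}(W).
\]
Substituting $\Vert\psi\Vert_p^p \leq (\mu(W)^{1/p} + \delta)^p$ yields a uniform (in $n_0$) upper bound on $\sum_{m \in A} d_{m-n_0}(W)$ whose ratio tends to $\mu(W)/\mu(W) = 1$ as $\eta \to 0$ with $\delta = o(\eta)$. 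Choosing $\eta$ and then $\delta$ small enough makes this bound strictly less than $2$, and $A$ still has positive lower density.

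The main obstacle to watch for is the appearance of negative shifts $k = m - n_0$, which occur whenever $m < n_0$. These are handled by shifting the \emph{base point} from $\varphi$ to $\psi$: the identity $T_f^m\varphi = \psi \circ f^k$, valid for every $k \in \mathbb{Z}$ thanks to the bijectivity of $f$, allows negative and positive shifts to be treated on identical footing, with the mass of $\psi$ simultaneously concentrated on pairwise disjoint pieces $f^k(B_k)$ inside the distinct translates $f^k(W)$ of the wandering set.
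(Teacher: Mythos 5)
Your proof is correct and follows essentially the same route as the paper's: both extract $A$ as the set of return times of a frequently hypercyclic vector to a small ball around $\rchi_W$, exploit the pairwise disjointness of the translates $f^k(W)$ of the wandering set, and convert $\mu\big(f^k(\cdot)\big)$ into $d_k(W)$ via the essential supremum of the Radon--Nikodym derivative. The differences are only in bookkeeping: you use a variable radius $\delta$, Markov's inequality and the concentration sets $B_k$ to bound the \emph{full} sum by a quantity tending to $1$, whereas the paper fixes the radius $\tfrac12\mu(W)^{1/p}$, runs the estimate through a change of variables in the integrals $\int_{f^m(W)}\vert\varphi\circ f^n\vert^p\,\mathrm{d}\mu$, and bounds the off-diagonal sum by $1$ before adding the term $d_0(W)=1$.
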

\begin{proof} Let $\varphi\in L^p(X)$ be a frequently hypercyclic vector for $T_f$ and let $W$ be a  wandering set with finite positive $\mu$-measure. Let
\begin{equation}\label{setA}
 A=\Big\{ n\in\mathbb{N}: \left\Vert T_f^n\varphi-\rchi_W\right\Vert_p^p<\frac{1}{2^p} \mu(W)\Big\},
\end{equation}
then $A$ has positive lower density.

By combining the definition of $A$ with the fact that $W$ is a  wandering set, we obtain for each $n\in A$,
\begin{equation}\label{W3a}
\int_W \left\vert \varphi\circ f^n-1\right\vert^p \textrm{d}\mu + \sum_{m\neq 0}\int_{f^m(W)} \left\vert \varphi\circ f^n\right\vert^p \textrm{d}\mu \le \left\Vert T_f^n \varphi-\rchi_W\right\Vert_p^p <\dfrac{1}{2^p}\mu(W).
\end{equation}
In particular, the second term of (\ref{W3a}) satisfies the following inequality for each $n\in A$,
\begin{equation}\label{eqc2}
 \sum_{m\neq 0}\int_{f^m(W)} \left\vert \varphi\circ f^n\right\vert^p \textrm{d}\mu <\dfrac{1}{2^p}\mu(W).
\end{equation}
Applying the triangle inequality to the first term of (\ref{W3a}) yields for each $n\in A$, 
\begin{equation}\label{xesp}
\left[\mu(W)\right]^{\frac1p}- \left(\int_W \left\vert\varphi\circ f^n\right\vert^p\textrm{d}\mu\right)^{\frac1p}\le \left(\int _W \left\vert\varphi\circ f^n-1\right\vert^p\textrm{d}\mu\right)^{\frac1p}<\frac12\left[\mu(W)\right]^{\frac{1}{p}}.
\end{equation}
Therefore, renaming $n$ by $k$ in (\ref{xesp}) yields
\begin{equation}\label{1839}\int_W \left\vert\varphi\circ f^k\right\vert^p\textrm{d}\mu> \frac{1}{2^p}\mu(W),\quad \forall k\in A.
\end{equation}

In what follows, keep $n\in A$ fixed and let $m\in A-n$ be arbitrary. Setting $k=n+m$ in
 (\ref{1839}) yields $\int_W \left\vert\varphi\circ f^{n+m}\right\vert^p\textrm{d}\mu> \frac{1}{2^p}\mu(W)$. By combining this with (\ref{eqc2}), we arrive at
\begin{equation}\label{eqc3}
\sum_{\substack{ m\in A-n\\ m\neq 0}}  \dfrac{\bigintsss_{f^m(W)} \left\vert \varphi\circ f^{n}\right\vert^p \textrm{d}\mu}{\bigintsss_{W}\left\vert\varphi\circ f^{n+m}\right\vert^p\textrm{d}\mu}<1.
\end{equation}
By the Change of Variables Formula, for all $m\in A-n$,
\begin{equation}\label{a49}
\int_{f^m(W)}\left\vert\varphi\circ f^n\right\vert^p\textrm{d}\mu=\int_{f^m(W)}\left\vert\varphi\circ f^{n+m}\circ f^{-m}\right\vert^p\textrm{d}\mu=\int_W \left\vert\varphi\circ f^{n+m}\right\vert^p d\big(\mu\circ f^m\big).
\end{equation}
Replacing (\ref{a49}) in (\ref{eqc3}) yields
\begin{equation}\label{arbol}
\sum_{\substack{ m\in A-n\\ m\neq  0}}  \dfrac{\bigintsss_W \left\vert\varphi\circ f^{n+m}\right\vert^p d\big(\mu\circ f^m\big)}{\bigints_{W}\left\vert\varphi\circ f^{n+m}\right\vert^p \dfrac{\textrm{d}\mu}{\textrm{d}\big(\mu\circ f^{m}\big)}\textrm{d}\big(\mu\circ f^m\big)}<1.
\end{equation}
For all $m\in A-n$,
\begin{equation}\label{999}
\int_{W}\left\vert\varphi\circ f^{n+m}\right\vert^p \dfrac{\textrm{d}\mu}{\textrm{d}\big(\mu\circ f^{m}\big)}\textrm{d}\big(\mu\circ f^m\big)\le\left\Vert \dfrac{\textrm{d}\mu}{\textrm{d}\big(\mu\circ f^m\big)}\bigg|_W \right\Vert_\infty\cdot
 \int_W \left\vert\varphi\circ f^{n+m}\right\vert^p\textrm{d}\big(\mu\circ f^m\big).
\end{equation}
Replacing  (\ref{999}) in (\ref{arbol}) and cancelling out the nonzero term $ \int_W \left\vert\varphi\circ f^{n+m}\right\vert^p\textrm{d}\big(\mu\circ f^m\big)$ yields
$$\sum_{\substack{ m\in A-n\\ m\neq 0}} \left\Vert \dfrac{\textrm{d}\mu}{\textrm{d}\big(\mu\circ f^m\big)}\bigg|_W\right\Vert_\infty^{-1}<1.
$$
Therefore, after renaming variables, we obtain
\begin{equation*}\label{eqc5}
\sum_{m\in A}  d_{m-n}(W)=
\sum_{m\in A} \left\Vert \dfrac{\textrm{d}\mu}{\textrm{d}\big(\mu\circ f^{m-n}\big)}\bigg|_W\right\Vert_\infty^{-1}=1+\sum_{\substack{ m\in A-n\\ m\neq 0}} \left\Vert \dfrac{\textrm{d}\mu}{\textrm{d}\big(\mu\circ f^m\big)}\bigg|_W\right\Vert_\infty^{-1}
<2.
\end{equation*}

\end{proof}

\begin{lemma}[{Bayart-Ruzsa \cite[Corollary 9]{BayRuz2015}}]\label{BR} Let $(\alpha_n)_{n\in\mathbb{Z}}$ be a sequence of non-negative real numbers such there exists $C>0$ such that either $\alpha_{n+1}\ge C \alpha_{n}$ for all $n\in\mathbb{Z}$, or $\alpha_{n}\ge C \alpha_{n+1}$ for all $n\in\mathbb{Z}$. Suppose that for some set $A\subset\mathbb{Z}$ with positive upper density the sequence $(\beta_n)_{n\in A}$ defined by
$ \beta_n=\sum_{m\in A} \alpha_{m-n}
$
is bounded. Then $\sum_{n\in\mathbb{Z}} \alpha_n<\infty$.
\end{lemma}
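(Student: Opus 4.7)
Without loss of generality, assume $\alpha_{n+1} \ge C \alpha_n$ for all $n \in \mathbb{Z}$; the other case is symmetric by passing to the reversed sequence $(\alpha_{-n})_n$. Iterating yields $\alpha_n \le C^{-j}\alpha_{n+j}$ for every $j \ge 0$, so each $\alpha_n$ is geometrically controlled by any forward-neighboring value. The plan is to first reduce summability on $\mathbb{Z}$ to summability on the difference set $A - A$, and then establish the latter by averaging.

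\emph{Step 1 (reduction to $A-A$).} The hypothesis $\beta_n \le M$ for $n \in A$ immediately gives $\alpha_{m - n} \le M$ for all $n, m \in A$, so $\alpha_k \le M$ for every $k \in A - A$. A classical result -- provable via Furstenberg's correspondence and Poincar\'e recurrence, or by a direct disjoint-translates argument -- states that if $A \subseteq \mathbb{Z}$ has positive upper density, then $A - A$ is syndetic: there exists $K \in \mathbb{N}$ such that every length-$K$ window in $\mathbb{Z}$ meets $A - A$. For each $n \in \mathbb{Z}$ choose $\sigma(n) \in \{0, \ldots, K-1\}$ with $n + \sigma(n) \in A - A$; then $\alpha_n \le C^{-(K-1)} \alpha_{n + \sigma(n)}$. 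Since the map $n \mapsto n + \sigma(n)$ is at most $K$-to-one,
\[
\sum_{n \in \mathbb{Z}} \alpha_n \;\le\; K C^{-(K-1)} \sum_{m \in A - A} \alpha_m,
\]
so it suffices to prove $\sum_{m \in A - A} \alpha_m < \infty$.

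\emph{Step 2 (averaging).} Let $\delta > 0$ denote the upper density of $A$, set $L_N := |A \cap [-N, N]|$, and fix $N_j \to \infty$ along which $L_{N_j} \ge (\delta/2)(2 N_j + 1)$. Summing the hypothesis $\beta_n \le M$ over $n \in A \cap [-N_j, N_j]$ and exchanging the order of summation,
\[
\sum_{k \in A - A} \alpha_k \, r_{N_j}(k) \;\le\; M\, L_{N_j}, \qquad r_{N_j}(k) := \bigl|(A \cap [-N_j, N_j]) \cap (A - k)\bigr|.
\]
The plan is to divide by $L_{N_j}$, locate a subset $D \subseteq A - A$ on which $r_{N_j}(k)/L_{N_j}$ is uniformly bounded below by some $\eta > 0$ along the subsequence, apply Fatou's lemma to obtain $\eta \sum_{k \in D} \alpha_k \le M$, and then transfer summability from $D$ back to $A - A$ via the growth condition exactly as in Step 1, provided $D$ is syndetic inside $A - A$.

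\emph{Main obstacle.} The technical crux is locating such a subset $D$ with a uniform lower bound on the correlation ratio $r_{N_j}(k)/L_{N_j}$. Pointwise this ratio can collapse to zero for individual $k \in A - A$ -- for instance, when $A$ is a syndetic skeleton plus a lacunary perturbation, $r_{N_j}(1)/L_{N_j} \to 0$ even though $1 \in A - A$ -- so the uniform lower bound cannot be obtained by any ``one variable at a time'' argument. Instead one must combine positive density of $A$ (via a second-moment or additive-energy estimate, in the spirit of Pl\"unnecke--Ruzsa or Plancherel on the Bohr compactification) with the one-sided geometric growth of $\alpha$ to compensate for the non-uniformity; this is the technical content of Bayart--Ruzsa's Corollary~9.
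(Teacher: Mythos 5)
First, note that the paper does not prove this lemma at all: it is quoted verbatim as \cite[Corollary 9]{BayRuz2015} and used as a black box, so there is no internal proof to compare your argument against. Your attempt therefore has to stand on its own, and it does not: it is explicitly incomplete. Step 1 is fine --- boundedness of $\beta_n$ gives $\alpha_k\le M$ on $A-A$, syndeticity of $A-A$ for a set of positive upper (Banach) density is classical, and the one-sided growth $\alpha_n\le C^{-j}\alpha_{n+j}$ together with the at-most-$K$-to-one map $n\mapsto n+\sigma(n)$ correctly reduces the problem to $\sum_{m\in A-A}\alpha_m<\infty$. The double-counting identity $\sum_k\alpha_k\,r_N(k)\le M L_N$ in Step 2 is also correct. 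But the conclusion you need from Step 2 --- a subset $D$, syndetic in $\mathbb{Z}$, together with a single subsequence $N_j$ and a single $\eta>0$ such that $r_{N_j}(k)\ge\eta L_{N_j}$ for \emph{all} $k\in D$ simultaneously --- is exactly the substance of Bayart--Ruzsa's result, and you do not establish it; you name it as the ``main obstacle,'' correctly observe that the ratio can degenerate pointwise and that no one-$k$-at-a-time argument works, and then defer to ``the technical content of Bayart--Ruzsa's Corollary~9.'' That is a concession, not a proof.

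Two further points make the gap more than a routine omission. First, the correspondence-principle/Khintchine-type statement you gesture at (the set of $k$ with $\overline{d}(A\cap(A-k))\ge\delta^2-\epsilon$ is syndetic) controls each $k$ along its \emph{own} witnessing intervals, so it does not by itself produce the common subsequence your Fatou step requires; closing that mismatch is where the real work lies. Second, your Step 2 as written makes no use of the growth hypothesis on $(\alpha_n)$, yet the lemma is false without it (this is precisely why the $\ell_p$ and $c_0$ weighted-shift situations differ in \cite{BayRuz2015}); so any completed argument must weave the geometric control of $\alpha$ into the correlation estimate, not merely into the final transfer from $D$ to $\mathbb{Z}$. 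As it stands, the proposal is a correct reduction plus an accurate description of the open step, which is not a proof of the lemma.
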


\subsection{Proof of Theorem \ref{thm:FreqHyPnec}}

 Set $\alpha_n= d_n(W)$ for all $n\in\mathbb{Z}$. By Lemma \ref{dnc}, we have that $\alpha_{n+1}\ge c^{-1} \alpha_n$ for all $n\in\mathbb{Z}$. Let $A$ be as in Lemma \ref{<17} and let $\big(\beta_n\big)_{n\in {A}}$ be defined by $\beta_n=\sum_{m\in A} \alpha_{m-n}$.
Then, by Lemma \ref{<17}, for each $n\in A$,
 $$\beta_n=\sum_{m\in A} \alpha_{m-n}= \sum_{m\in A}  d_{m-n}(W)<2.
$$
Hence, $\big(\beta_n\big)_{n\in {A}}$ is bounded. By Lemma \ref{BR}, 
$$ \sum_{n\in\mathbb{Z}} d_n(W)=\sum_{n\in\mathbb{Z}} \alpha_n<\infty.
$$
\subsection{Proof of Theorem \ref{thm:FreqHyP}}

\noindent{Proof of $(a)\implies (b)$}. This follows immediately from Theorem \ref{thm:GenSCChar}. 

\noindent Proof of $(b)\implies (a)$.
 Let $B\in\mathcal{B}$ with $\mu(B)<\infty$ and
$\epsilon>0$ be given. We will verify Condition (SC).
Let $W$ be a wandering set of finite positive $\mu$-measure satisfying the bounded distortion Conditions (i) and (ii) in  Definition \ref{defnBD}.
For each $i\in\mathbb{Z}$, set $W_i=f^i(W)$. By (i), there exists
$N\in\mathbb{N}$ such that if $B'=B\cap \bigcup_{\vert i\vert\le N}f^i(W)$, then $\mu(B{\setminus B'})<\epsilon$. We claim that
there exists $K>0$ such that for each integer $-N\le i\le N$, each $D\in\mathcal{B}(W_i)$ with positive $\mu$-measure and each $n\in\mathbb{Z}$,
$$  \frac{\mu(D)}{\mu\big(f^n(D)\big)}\le K^2\frac{\mu(W_i)}{\mu\big(f^n(W_i)\big)}.
$$
In fact, since $f$ is bijective, bimeasurable and non-singular,
any set $D\in \mathcal{B}(W_i)$ has positive $\mu$-measure
if and only if $D=f^i(C)$, where $C\in\mathcal{B}(W)$ has positive $\mu$-measure. By Condition (ii) in Definition \ref{defnBD}, we have that
\begin{eqnarray*}
\frac{\mu\big(W_i\big)}{\mu\big(f^n(W_i)\big)}&=&\frac{\mu\big(f^i(W)\big)}{\mu\big(f^{n+i}(W)\big)}\\&=&\frac{\mu\big(f^i(W)\big)}{\mu(W)}\cdot\frac{\mu(W)}
{\mu\big(f^{n+i}(W)\big)}\\&\ge& \frac{1}{K^2}\frac{\mu\big(f^i(C)\big)}
{\mu(C)}\frac{\mu(C)}{\mu\big(f^{n+i}(C)\big)}\\&=&\frac{1}{K^2}
\frac{\mu(D)}{\mu\big(f^n(D)\big)}.
\end{eqnarray*}
which proves the claim. 

Hence, since $D\in\mathcal{B}(W_i)$ is an arbitrary measurable
set of positive $\mu$-measure, we obtain
$$ \frac{\mu(W_i)}{\mu\big(f^n(W_i)\big)}\ge \frac{1}{K^2}\frac{\mathrm{d}\mu}{\mathrm{d}\big(\mu\circ f^n\big)}(x),\quad
\textrm{for $\mu$-almost every $x\in W_i$}.
$$
Therefore, since $W_i$ is a wandering set and $T_f$ is frequently hypercyclic, by Theorem \ref{thm:FreqHyPnec} we arrive at
$$ \sum_{n\in\mathbb{Z}}\frac{\mu\big(f^n(W_i)\big)}{\mu(W_i)}\le K^2 \sum_{n\in\mathbb{Z}}\left\Vert\frac{\mathrm{d}\mu}{\mathrm{d}\big(\mu\circ f^n\big)}\bigg|_{W_i}\right\Vert_{\infty}^{-1}<\infty.
$$
In this way,
$$\sum_{n\in\mathbb{Z}} \mu\big(f^n(B')\big)\le
\sum_{n\in\mathbb{Z}}\sum_{\vert i\vert\le N} \mu\big(f^n(W_i)\big)=\sum_{\vert i\vert\le N}\mu(W_i)\sum_{n\in\mathbb{Z}} \frac{\mu\big(f^n(W_i)\big)}{\mu(W_i)}<\infty.$$

\noindent{Proof of $(a)\iff (c)$}. This follows immediately from Corollary \ref{cor:chaotic}.

\section{Problems and final comments}
Below we list some problems related to our work.
\begin{prob}
In Theorem~\ref{SCFHC}, we show that (FHC) implies (SC) for $ p \ge 2$. What happens for $1 \le p <2$?
\end{prob}
\begin{prob}
Is the condition of bounded distortion necessary in Theorem~\ref{thm:FreqHyP}?
\end{prob}
\begin{prob}
In Example~\ref{Exe1} is $T_f$  frequently hypercyclic? If not, this would give an alternative solution to a result of Menet \cite{M2017}.
\end{prob}
\begin{prob}
We have thoroughly studied the case when the measurable system is dissipative. What happens in the case when the measurable system is conservative? In particular, what can be said when $X$ is an odometer endowed with the product measure? Characterizations of hypercyclic and topologically mixing operators of such type were given in \cite{bongiorno2019linear}.
\end{prob}

Concerning the extension of our results to the non-bijective case, a slight variation of the proof of Theorem \ref{thm:FreqHyPnec} leads to the result stated below. We recall that a measurable set $W\subseteq X$ is a \textit{forward wandering set} if $W, f(W), f^2(W),\ldots$ are pairwise disjoint.

\begin{theorem}[Necessary condition for frequent hypercyclicity]\label{thmps} Let $(X,\mathcal{B},\mu)$ be a $\sigma$-finite measure space and $f:X\to X$ be an injective bimeasurable map satisfying $(\star)$  with associated composition operator $T_f$ frequently hypercyclic. Then for every forward wandering set $W$ with positive finite $\mu$-measure, the following inequality holds
 $$\sum_{n\in\mathbb{N}} \left\Vert\dfrac{\mathrm{d}\mu}{\mathrm{d}\big(\mu\circ f^n\big)}\bigg|_W \right\Vert_{\infty}^{-1}<\infty\cdot$$
\end{theorem}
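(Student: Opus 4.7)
The plan is to mimic the argument of Theorem \ref{thm:FreqHyPnec} in a one-sided form, exploiting only that $W, f(W), f^2(W),\ldots$ are pairwise disjoint. Writing $d_n(W)=\|d\mu/d(\mu\circ f^n)|_W\|_\infty^{-1}$ for $n\ge 0$, as in (\ref{alfa}), the goal becomes $\sum_{n\ge 0}d_n(W)<\infty$.

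First I would record the one-sided analogue of Lemma \ref{dnc}. Injectivity of $f$ gives $f^{-1}(f(B))=B$, so $(\star)$ yields $\mu(B)\le c\,\mu(f(B))$ for every $B\in\mathcal{B}$, hence $\mu(f^n(B))\le c\,\mu(f^{n+1}(B))$ for every $n\ge 0$. The chain-rule computation that produced (\ref{auxiliary1}) and (\ref{er56}) then gives, without change, $d_{n+1}(W)\ge c^{-1}d_n(W)$ for every $n\ge 0$.

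Next I would adapt Lemma \ref{<17}. Let $\varphi$ be a frequently hypercyclic vector for $T_f$ and let $A$ be defined as in (\ref{setA}); it has positive lower density in $\mathbb N$. Since $W,f(W),f^2(W),\ldots$ are pairwise disjoint, the splitting used in (\ref{W3a}) yields, for each $n\in A$,
\[
\int_W|\varphi\circ f^n-1|^p\,d\mu+\sum_{m\ge 1}\int_{f^m(W)}|\varphi\circ f^n|^p\,d\mu<\frac{1}{2^p}\mu(W),
\]
whence both (\ref{eqc2}) and (\ref{1839}) remain valid. The change of variables (\ref{a49}) needs only that $f^m$ is injective on $W$, which holds, so the remainder of the argument goes through verbatim and gives, for every $n\in A$,
\[
\sum_{m\in A,\;m\ge n}d_{m-n}(W)<2.
\]

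Finally I would apply the Bayart--Ruzsa lemma (Lemma \ref{BR}) by passing to a two-sided sequence. Set $\alpha_n=d_n(W)$ for $n\ge 0$ and $\alpha_n=0$ for $n<0$. The monotonicity $\alpha_{n+1}\ge c^{-1}\alpha_n$ holds for $n\ge 0$ by the previous step and is automatic for $n<0$ (where the right side vanishes). Since $A$ has positive lower density in $\mathbb N$, it has positive upper density when viewed inside $\mathbb Z$, and the previous bound translates to $\beta_n=\sum_{m\in A}\alpha_{m-n}<2$ for $n\in A$ (the terms with $m<n$ contribute zero). Lemma \ref{BR} then delivers $\sum_{n\in\mathbb Z}\alpha_n=\sum_{n\in\mathbb N}d_n(W)<\infty$, which is the required conclusion. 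I do not foresee any genuine obstacle; the only delicate point is the derivation of the one-sided inequality $\mu(B)\le c\,\mu(f(B))$, which is exactly where injectivity (in place of invertibility) intervenes.
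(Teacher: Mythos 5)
Your proposal is correct and is essentially the argument the paper intends: the authors give no separate proof of Theorem~\ref{thmps}, stating only that it follows by ``a slight variation'' of the proof of Theorem~\ref{thm:FreqHyPnec}, and your one-sided adaptation (deriving $\mu(B)\le c\,\mu(f(B))$ from injectivity and $(\star)$, restricting the splitting in (\ref{W3a}) to $m\ge 1$, and zero-padding $\alpha_n$ for $n<0$ so that Lemma~\ref{BR} applies) is exactly that variation, carried out correctly.
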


Let $w=(w_i)_{i\in\mathbb{N}}$ be a bounded sequence of positive real numbers and $B_{w}$ be the backward weighted shift on $\ell_p(\mathbb{N})$. By considering $X=\mathbb{N}$, $\mathcal{B}=\mathcal{P}(\mathbb{N})$ (the power set of $\mathbb{N}$), $f:i\in\mathbb{N}\to i+1\in\mathbb{N}$, $W=\{0\}$ and $\mu(\{i\})=\dfrac{1}{\left(w_0\cdots w_i\right)^p}$ in Theorem \ref{thmps}, we obtain the implication $(i)\implies (iii)$ in \cite[Theorem 4]{BayRuz2015}, which leads to the known conclusion that frequently hypercyclic backward weighted shifts on $\ell_p(\mathbb{N})$ are chaotic. Likewise, by means of Corollary \ref{cor:ergodic}, we obtain the equivalence $(i)\iff (iii)$ in  \cite[Theorem 3]{BayRuz2015} and we
 arrive at the known conclusion that  a backward weighted shift on $\ell_p(\mathbb{Z})$ is frequently hypercyclic if and only if it is chaotic. 
 
 Apart from Theorem \ref{thmps}, we do not know if the other results can be generalized to the non-bijective case. Note that we use the hypothesis that $f$ is bijective in the Hopf Decomposition Theorem.


\begin{thebibliography}{10}

\bibitem{AA}
Jon Aaronson.
\newblock {\em An introduction to infinite ergodic theory}, volume~50 of {\em
  Mathematical Surveys and Monographs}.
\newblock American Mathematical Society, Providence, RI, 1997.

\bibitem{BDP}
Fr\'{e}d\'{e}ric Bayart, Udayan~B. Darji, and Benito Pires.
\newblock Topological transitivity and mixing of composition operators.
\newblock {\em J. Math. Anal. Appl.}, 465(1):125--139, 2018.

\bibitem{BayGri2006}
Fr\'{e}d\'{e}ric Bayart and Sophie Grivaux.
\newblock Frequently hypercyclic operators.
\newblock {\em Trans. Amer. Math. Soc.}, 358(11):5083--5117, 2006.

\bibitem{BayGri2007}
Fr\'{e}d\'{e}ric Bayart and Sophie Grivaux.
\newblock Invariant {G}aussian measures for operators on {B}anach spaces and
  linear dynamics.
\newblock {\em Proc. Lond. Math. Soc. (3)}, 94(1):181--210, 2007.

\bibitem{BayRuz2015}
Fr\'{e}d\'{e}ric Bayart and Imre~Z. Ruzsa.
\newblock Difference sets and frequently hypercyclic weighted shifts.
\newblock {\em Ergodic Theory Dynam. Systems}, 35(3):691--709, 2015.

\bibitem{BJDP}
N.~C. Bernardes, Jr., U.~B. Darji, and B.~Pires.
\newblock Li-{Y}orke chaos for composition operators on {$L^p$}-spaces.
\newblock {\em Monatsh. Math.}, 191(1):13--35, 2020.

\bibitem{bongiorno2019linear}
Donatella. Bongiorno, Emma D'Aniello, Udayan~B. Darji, and Luisa~Di Piazza.
\newblock Linear dynamics induced by odometers, 2019.

\bibitem{BGE}
A.~Bonilla and K.-G. Grosse-Erdmann.
\newblock Frequently hypercyclic operators and vectors.
\newblock {\em Ergodic Theory Dynam. Systems}, 27(2):383--404, 2007.

\bibitem{charpentier2019chaos}
Stéphane Charpentier, Karl Grosse-Erdmann, and Quentin Menet.
\newblock Chaos and frequent hypercyclicity for weighted shifts, 2019.

\bibitem{daniello2020generalized}
Emma D'Aniello, Udayan~B. Darji, and Martina Maiuriello.
\newblock Generalized hyperbolicity and shadowing in $l^p$ spaces, 2020.

\bibitem{GM2014}
Sophie Grivaux and \'{E}tienne Matheron.
\newblock Invariant measures for frequently hypercyclic operators.
\newblock {\em Adv. Math.}, 265:371--427, 2014.

\bibitem{GrosseStudia2000}
K.-G. Grosse-Erdmann.
\newblock Hypercyclic and chaotic weighted shifts.
\newblock {\em Studia Math.}, 139(1):47--68, 2000.

\bibitem{GEP}
K.-G. Grosse-Erdmann and Alfredo Peris.
\newblock Frequently dense orbits.
\newblock {\em C. R. Math. Acad. Sci. Paris}, 341(2):123--128, 2005.

\bibitem{Orlicz}
Mikhail~I. Kadets and Vladimir~M. Kadets.
\newblock {\em Series in {B}anach spaces}, volume~94 of {\em Operator Theory:
  Advances and Applications}.
\newblock Birkh\"{a}user Verlag, Basel, 1997.
\newblock Conditional and unconditional convergence, Translated from the
  Russian by Andrei Iacob.

\bibitem{K}
Ulrich Krengel.
\newblock {\em Ergodic theorems}, volume~6 of {\em De Gruyter Studies in
  Mathematics}.
\newblock Walter de Gruyter \& Co., Berlin, 1985.
\newblock With a supplement by Antoine Brunel.

\bibitem{M2017}
Quentin Menet.
\newblock Linear chaos and frequent hypercyclicity.
\newblock {\em Trans. Amer. Math. Soc.}, 369(7):4977--4994, 2017.

\bibitem{menet2019inverse}
Quentin Menet.
\newblock Inverse of frequently hypercyclic operators, 2019.

\bibitem{JPWM}
Jacob Palis, Jr. and Welington de~Melo.
\newblock {\em Geometric theory of dynamical systems}.
\newblock Springer-Verlag, New York-Berlin, 1982.
\newblock An introduction, Translated from the Portuguese by A. K. Manning.

\bibitem{SM}
R.~K. Singh and J.~S. Manhas.
\newblock {\em Composition operators on function spaces}, volume 179 of {\em
  North-Holland Mathematics Studies}.
\newblock North-Holland Publishing Co., Amsterdam, 1993.

\end{thebibliography}
\end{document}